\newcommand{\IB}{\mathcal{B}}
\newcommand{\IC}{\mathbb{C}}
\newcommand{\IR}{\mathbb{R}}
\newcommand{\IMM}{\mathscr{M}}
\newcommand{\ITT}{\mathscr{T}}
\newcommand{\ILL}{\mathscr{L}}
\newcommand{\IFF}{\mathscr{F}}
\newcommand{\IRR}{\mathscr{R}}
\newcommand{\ICC}{\mathscr{C}}
\newcommand{\IN}{\mathbb{N}}
\newcommand{\IK}{\mathbb{K}}
\newcommand*{\longhookrightarrow}%
               {\ensuremath{\lhook\joinrel\relbar\joinrel\rightarrow}}
\newcommand{\Id}{{\rm d}}
\newcommand{\f}{\frac}
\newcommand{\nn}{\nonumber}
\theoremstyle{plain}            
\newtheorem{theorem}{theorem}[section]
\newtheorem{Lemma}[theorem]{Lemma}
\newtheorem{Corollary}[theorem]{Corollary}
\newtheorem{Theorem}[theorem]{Theorem}
\newtheorem{Proposition}[theorem]{Proposition}
\theoremstyle{definition}       
\newtheorem{Definition}[theorem]{Definition}
\newtheorem{Remark}[theorem]{Remark}
\newcounter{myenumi}
\def\@fnsymbol#1{\ifcase#1\or a\or b\or c\or
   d\or e\or f\or g\or h\or
    i\else\@ctrerr\fi}
\begin{document}

 \begin{titlepage}
   \renewcommand{\thefootnote}{\alph{footnote}}

\title{ Functions with bounded variation on a class of Riemannian manifolds with Ricci 
curvature unbounded from below}
  \author{Batu G\"uneysu\footnote{Institut f\"ur Mathematik,
   Humboldt-Universit\"at zu Berlin; e-mail: gueneysu@math.hu-berlin.de}\\
Diego Pallara\footnote{Dipartimento di Matematica e Fisica \emph{Ennio De Giorgi},
Universit\`a del Salento, Lecce; e-mail: diego.pallara@unisalento.it}
}

\end{titlepage}
\date{}
\maketitle 

\begin{abstract} 
After establishing some new global facts (like a measure theoretic structure theorem
and a new invariant characterization of Sobolev functions) about complex-valued functions with bounded variation on arbitrary
noncompact Riemannian 
manifolds, we extend results of Miranda/the second author/Paronetto/Preunkert 
and of Carbonaro/Mauceri on the heat se\-migroup characterization of the variation of 
$\mathsf{L}^1$-functions to a class of Riemannian manifolds with possibly unbounded
from below Ricci curvature. 
\end{abstract}

\section{Introduction}

In the last decades, the class of integrable functions with bounded variation
($\mathsf{BV}$) has proven to be 
very well suited for the formulation of geometric variation problems in the
\emph{Euclidean} $\IR^m$. The 
essential advantage of $\mathsf{BV}$ in this context when compared with the smaller
Sobolev class 
$\mathsf{W}^{1,1}$ is the possibility of allowing discontinuities along
hypersurfaces. For example, the 
class of characteristic functions of Caccioppoli sets is a subclass of $\mathsf{BV}$
which turns out to 
be the appropriate class to formulate the isoperimetric problem. We refer the reader
to \cite{ambro} for 
various aspects of $\mathsf{BV}$ functions in Euclidean space.  \\
The definition of the variation of a function is intuitively clear and classical for
the case of one 
variable, $m=1$, while the first 
satisfactory (from the geometric/variational point of view) extension of this
definition to arbitrary 
$m$ which descends to the usual definition if $m=1$ has been given in \cite{giorgi},
where De Giorgi 
defines the variation of $f\in \mathsf{L}^1(\IR^m,\IR)$ to be the well-defined 
quantity 
\[
\lim_{t\to 0+} \int_{\IR^m} |\mathrm{grad} (\mathrm{e}^{t\Delta}f)|(x) \Id x\in
[0,\infty].
\]
The main result of \cite{giorgi} implies   
\begin{align} \label{wsx}
&\mathrm{Var}(f):= \lim_{t\to 0+} \int_{\IR^m} |\mathrm{grad} (
\mathrm{e}^{t\Delta}f)|(x) \Id x
\\ \nn
=&\sup\left\{\left.\int_{\IR^m}   f(x) \mathrm{div} \alpha(x)  \Id x\>\right| 
\alpha\in [\mathsf{C}^{\infty}_{0}(\IR^m,\IR)]^m, \left\|\alpha\right\|_{\infty}\leq 1
\right\},
\end{align}
an equality which turned out to be the starting point for all the above mentioned
Euclidean results. 
Indeed, it follows from \eqref{wsx} that $\mathrm{Var}(f)$ is finite, if and only if
the distributional 
gradient of $f$ is given by integrating against a $\IR^m$-valued Borel measure.\\ 
Noting that all of the data in  (\ref{wsx}) can be defined invariantly using
differential forms and 
the exterior derivative (cf. Definition \ref{vari} below) and in view of the
importance of (\ref{wsx}) 
for the Euclidean case, the aim of this paper is to 
formulate and prove the above result in a very large class of noncompact Riemannian
manifolds, 
where we want to allow complex-valued functions.
\\
Our main result here is Theorem \ref{BV}, which essentially reads as follows:
\vspace{1.3mm}

\emph{Let $M$ be a geodesically complete smooth Riemannian manifold whose Ricci
curvature $\IRR$ admits 
a decomposition $\IRR=\IRR_1-\IRR_2$ into pointwise self-adjoint Borel sections
$\IRR_1,\IRR_2\geq 0$ 
in $\mathrm{End}( \mathrm{T}^* M)$ such that $|\IRR_2|\in\mathcal{K}(M)$, the Kato
class of $M$. 
Then for any $f\in \mathsf{L}^1(M)$, the complex-valued version of the equality in
(\ref{wsx}) holds true. }
\vspace{1.3mm}

As bounded functions are always in the Kato class (see section \ref{haup}), the
above result extends the 
results of \cite{mira,carbo} in the sense that we do not have to assume that $\IRR$
is bounded from below, 
the latter condition being equivalent to $|\IRR_-|\in\mathsf{L}^{\infty}(M)$, where
$\IRR_-$ is the negative 
part of the Ricci curvature ($\IRR_-$ can be defined using the spectral calculus on
the fibers of 
$\mathrm{T}^*M$). In fact, we can drop the latter condition using probabilistic
techniques: it follows 
from Weitzenb\"ock\rq{}s formula that the heat semigroup given by the Laplace
operator on $1$-forms is a 
\emph{generalized Schr\"odinger semigroup} in the sense of \cite{G2} whose potential
term is given by 
$\IRR$. Thus we can combine the results from \cite{G2} on probabilistic formulae for
such semigroups 
together with a new probabilistic estimate on Kato functions (see Lemma \ref{xdd}
below), which turns 
out to be just enough to derive the equality in (\ref{wsx}). As a consequence of
Theorem \ref{BV} we 
can derive $\mathsf{L}^p$-type criteria on the Ricci curvature that imply the
validity of (\ref{wsx}). 
Furthermore, it is also possible to derive a certain stability result of (\ref{wsx})
under a class of 
conformal transformations of the underlying Riemannian structure.\\
Let us remark that the Kato condition has been used for decades in order to deal
with \emph{local} 
singularities of potentials in quantum mechanics \cite{kato2,ai,sg,Bro1,sturm,G6}.
On the other side, 
as far as we know, this paper is the first one where the Kato condition is used in
order to deal with 
possibly \emph{globally} unbounded potential terms in a purely Riemann geometric
setting.\vspace{1mm}

This paper is organized as follows: \vspace{1mm}

\emph{Defining} the variation by the right 
hand\footnote{ We prefer the right hand side instead of the left hand side, as the
former also makes 
for \emph{locally} integrable functions} side of the invariant version of
(\ref{wsx}), we first 
collect several properties of the variation 
that are valid for arbitrary (geodesically complete) Riemannian manifolds in section
\ref{gen}. For example, 
we prove that functions having bounded variation are precisely those whose
distributional 
derivative is a generalized vector measure, a new structure theorem, which produces
the classical structure 
theorem in the Euclidean case, but can become technical for nonparallelizable
manifolds (see Theorem 
\ref{Bvmeasures}). This structure theorem produces also a new global characterization of first order Sobolev functions (see Proposition \ref{so}). Among other results, we have also added further equivalent
characterizations of the 
variation to the latter section, like an approximation result (see Theorem
\ref{appros}), and also an 
invariance result of $\mathsf{BV}$ under quasi-isometric changes of the Riemannian
structure (see 
Corollary \ref{eqw}).  \\
Section \ref{haup} is devoted to the formulation and the proof of our main result
Theorem \ref{BV}, as 
well as the before mentioned $\mathsf{L}^p$-type criteria for the validity of
(\ref{wsx}) (see Corollary 
\ref{cor1}), and the stability result of the latter equality under certain conformal
transformations (see 
Corollary \ref{cor2}). In section \ref{haup}, we also recall the necessary
definitions and facts about 
Brownian motion and the Kato class $\mathcal{K}(M)$ of $M$. \\
Finally, we have collected some abstract facts on vector measures on locally compact
spaces in an appendix.

\section{Setting and general facts on the variation}\label{gen}

Let $M$ denote a smooth connected Riemannian manifold without boundary, with
$\mathrm{vol}(\Id x)$ 
the Riemannian volume measure, and $\mathrm{K}_r(x)$ the open geodesic ball with
radius $r$ around $x$. 
Unless otherwise stated, the underlying fixed Riemannian structure will be ommitted
in the notation. We 
set $m:=\dim M$. \\
If $E\rightarrow M$ is a smooth Hermitian vector 
bundle, then, abusing the notation in the usual way, the Hermitian structure will be
denoted with 
$(\bullet,\bullet)_x$, $x\in M$; moreover, $\left|\bullet\right|_x$ will stand for
the norm and the operator norm corresponding to $(\bullet,\bullet)_x$ on each fiber
$E_x$, 
and $\left\langle \bullet,\bullet \right\rangle$ for the inner product in the
Hilbert space 
$\Gamma_{\mathsf{L}^2}(M,E)$, that is,
\begin{align}
  \label{adj}
  \left\langle f_1,f_2 \right\rangle
  =\int_M (f_1(x),f_2(x))_x \mathrm{vol}(\Id x).
\end{align} 
Furthermore, the norm $\left\|\bullet\right\|_p$ on $\Gamma_{\mathsf{L}^p}(M,E)$ is
given by
\begin{align}
\left\|f\right\|_p=\left(\int_M |f(x)|^p_x \mathrm{vol}(\Id x)\right)^{1/p}
\end{align}
if $p\in [1,\infty)$, and $\left\|f\right\|_{\infty}$ is given by the infimum of all 
$C\geq 0$ such that $|f(x)|_x\leq C$ for a.e. $x \in M$. The corresponding operator
norms on 
the spaces of bounded linear operators $\ILL(\Gamma_{\mathsf{L}^p}(M,E))$ will also
be denoted 
with $\left\|\bullet\right\|_p$. If $\tilde{E} \to M$ is a second bundle as above
and if
\[
  D \colon \Gamma_{\mathsf{C}^{\infty}}(M,E)
  \longrightarrow \Gamma_{\mathsf{C}^{\infty}}(M,\tilde{E}) 
\]
is a linear differential operator, then we denote with $D^{\dagger}$
the formal adjoint of $D$ with respect to~\eqref{adj}.  \\
We will apply the above in the following situation: For any $k=0,\dots,m$ we will
consider the smooth 
Hermitian\footnote{so everything has been complexified} vector bundle of $k$-forms 
$\bigwedge^k \mathrm{T}^* M\to  M$, with
\[
\Omega^k_{\ICC}(M):= \Gamma_{\ICC}\Big(M, \bigwedge^k \mathrm{T}^* M\Big),\>\text{
where 
$\ICC= \mathsf{C}^{\infty}, \mathsf{L}^p$, etc.}
\]
In order to make the notation consistent, we will set $\bigwedge^0 \mathrm{T}^*
M:=M\times \IC$, 
where of course $\ICC(M)=\Omega^0_{\ICC}(M)$ for functions. In particular, all
function spaces are 
spaces of \emph{complex-valued} functions. The subscript \lq\lq{}0\rq\rq{} in
$\ICC_0$ will always 
stand for \lq\lq{}compactly supported elements of $\ICC$\rq\rq{}. Whenever
necessary, we will write 
$\ICC_{\IR}$ for the real-valued elements of $\ICC$. If
\[
\Id_k: \Omega^k_{\mathsf{C}^{\infty}}(M) 
  \longrightarrow \Omega^{k+1}_{\mathsf{C}^{\infty}}(M)
\]
stands for the exterior derivative, then the Laplace-Beltrami operator acting on
$k$-forms on $M$ 
is given as 
\[
-\Delta_k:=\Id^{\dagger}_k\Id_k+\Id_{k-1}\Id^{\dagger}_{k-1}:\Omega^k_{\mathsf{C}^{\infty}}(M) \longrightarrow \Omega^{k}_{\mathsf{C}^{\infty}}(M).
\]
We shall write $\Id:=\Id_0$ for the exterior derivative on functions, so that 
$-\Delta:=-\Delta_0=\Id^{\dagger}\Id$. \vspace{1.2mm}

To make contact with the introduction, we add:

\begin{Remark} If $\alpha\in\Omega^1_{\mathsf{C}^{\infty}}(M)$ and if $X^{(\alpha)}$
is the smooth 
vector field on $M$ corresponding to $\alpha$ by the Riemannian duality on $M$, then
an integration 
by parts shows $\Id^{\dagger}\alpha=-\mathrm{div}X^{(\alpha)}$. Here, the divergence 
$\mathrm{div}(X)\in\mathsf{C}^{\infty}(M)$ of a smooth vector field $X$ on $M$ is
defined in the 
usual way as follows: locally, if 
\[
\text{$X=\sum^m_{j=1} X^j \f{\partial}{\partial x^j}$ with $X^j
\in\mathsf{C}^{\infty}(M)$, 
then $\mathrm{div}X=\sum^m_{j=1}\f{\partial}{\partial x^j} X^j$.}
\]
Furthermore, if $f\in \mathsf{C}^{\infty}(M)$, then $\mathrm{grad} f$ is the smooth
vector field on 
$M$ given by $\mathrm{grad}f=X^{(\Id f)}$, where locally
$$
\Id f= \sum^m_{j=1}\f{\partial f}{\partial x^j}\Id x^j.
$$

\end{Remark}

The Friedrichs realization of 
$-\Delta_k/2$ in $\Omega^k_{\mathsf{L}^{2}}(M)$ will be denoted with $H_k\geq 0$,
where again 
$H:=H_0$ on functions. We will freely use the fact that for any $p\in [1,\infty]$
the strongly 
continuous self-adjoint semigroup of contractions $(\mathrm{e}^{-t H})_{t\geq
0}\subset \ILL(\mathsf{L}^2(M))$ 
uniquely extends to a strongly continuous semi-group of contractions 
$(\mathrm{e}^{-t H})_{t\geq 0}\subset \ILL(\mathsf{L}^p(M))$, and by local elliptic
regularity, 
$\mathrm{e}^{-tH}f$ has a smooth representative $\mathrm{e}^{-tH}f(\bullet)$ for any
$t>0$, 
$f\in \mathsf{L}^p(M)$. \vspace{1.2mm}

\vspace{2mm}

The following definition is a generalization of Definition (1.4) in \cite{mira} to
\emph{complex-valued} 
and \emph{locally} integrable functions:

\begin{Definition}\label{vari} Let $f\in \mathsf{L}^1_{\mathrm{loc}}(M)$. Then the
quantity  
\begin{align}
\mathrm{Var}(f):&=\sup\left\{\left.\left|\int_M  \overline{ f(x)}\Id^{\dagger}
\alpha(x)  
\mathrm{vol}(\Id x)\right|\>\right| \alpha\in\Omega^1_{\mathsf{C}^{\infty}_0}(M), 
\left\|\alpha\right\|_{\infty}\leq 1\right\}\nn\\
&\>\in [0,\infty]
\end{align}
is called the {\it variation} of $f$, and $f$ is said to have {\it bounded
variation} if 
$\mathrm{Var}(f)<\infty$. 
\end{Definition}
Note the following trivial equalities for any $f\in \mathsf{L}^1_{\mathrm{loc}}(M)$,        
\begin{align}
\mathrm{Var}(f)&=\sup\left\{\left.\left|\int_M   f(x)\overline{\Id^{\dagger}
\alpha(x) } 
\mathrm{vol}(\Id x)\right|\>
\right| \alpha\in\Omega^1_{\mathsf{C}^{\infty}_0}(M),
\left\|\alpha\right\|_{\infty}\leq 1\right\}
\nn\\
&=\sup\left\{\left.\left|\int_M f(x)\Id^{\dagger} \alpha(x)  \mathrm{vol}(\Id x)\right|
\>\right| \alpha\in\Omega^1_{\mathsf{C}^{\infty}_0}(M),
\left\|\alpha\right\|_{\infty}\leq 1\right\}
\nn,
\end{align}
and that of course the property $\mathrm{Var}(f)<\infty$ depends very sensitively on
the 
Riemannian structure of $M$ (see Corollary \ref{eqw} below for a certain stability). \\
We will now collect some properties of the variation which are valid without any
assumptions on the 
Riemannian structure of $M$. 

As in the Euclidean case, the variation of a \emph{smooth} function $f$ can be
calculated explicitly from 
the $\mathsf{L}^1$-norm of $\Id f$:

\begin{Proposition}\label{d14} 
For all $f\in \mathsf{C}^{\infty}(M)$ one has $\mathrm{Var}(f)= \left\|\Id
f\right\|_1$.
\end{Proposition}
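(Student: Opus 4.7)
The plan is to prove the two inequalities $\mathrm{Var}(f) \leq \|df\|_1$ and $\mathrm{Var}(f) \geq \|df\|_1$ separately, both of which ultimately rest on the formal adjoint identity $\langle df, \alpha\rangle = \langle f, d^\dagger \alpha\rangle$ for $\alpha \in \Omega^1_{C^\infty_0}(M)$ (valid even when $f$ is merely smooth, since compact support of $\alpha$ kills boundary terms in Stokes' theorem).

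For the upper bound, I would fix an admissible test form $\alpha \in \Omega^1_{C^\infty_0}(M)$ with $\|\alpha\|_\infty \leq 1$, rewrite
\[
  \int_M \overline{f(x)} d^\dagger\alpha(x)\,\mathrm{vol}(dx) = \int_M (df,\alpha)_x\,\mathrm{vol}(dx),
\]
and then estimate pointwise via Cauchy--Schwarz on each fiber, $|(df,\alpha)_x| \leq |df|_x|\alpha|_x \leq |df|_x$, to conclude that the modulus of the left-hand side is at most $\|df\|_1$. Taking the supremum over $\alpha$ gives $\mathrm{Var}(f) \leq \|df\|_1$.

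For the lower bound I would construct an explicit almost-optimizing sequence. Pick a compact exhaustion $K_n \nearrow M$ and smooth cutoffs $\chi_n \in C^\infty_0(M,[0,1])$ with $\chi_n \equiv 1$ on $K_n$, and set
\[
  \alpha_n := \chi_n \cdot \frac{df}{\sqrt{|df|^2 + 1/n^2}} \in \Omega^1_{C^\infty_0}(M).
\]
This $\alpha_n$ is compactly supported, smooth (since the denominator is strictly positive and smooth, and $df$ is smooth), and satisfies $|\alpha_n|_x \leq \chi_n(x) |df|_x/\sqrt{|df|_x^2+1/n^2} \leq 1$. Applying the adjoint identity again, since the Hermitian pairing gives $(df,df)_x = |df|_x^2 \in \mathbb{R}_{\geq 0}$,
\[
  \int_M \overline{f}\, d^\dagger \alpha_n\,\mathrm{vol}(dx) = \int_M \chi_n\frac{|df|_x^2}{\sqrt{|df|_x^2+1/n^2}}\,\mathrm{vol}(dx),
\]
a nonnegative real quantity. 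The integrand increases monotonically in $n$ (both $\chi_n$ and the function $s\mapsto s^2/\sqrt{s^2+1/n^2}$ of $s=|df|_x$ increase) and converges pointwise to $|df|_x$, so monotone convergence yields $\int_M \overline{f}\, d^\dagger\alpha_n\,\mathrm{vol}(dx)\to \|df\|_1$. Hence $\mathrm{Var}(f) \geq \|df\|_1$, completing the proof.

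I do not anticipate a genuine obstacle here: smoothness of $f$ makes integration by parts unconditional, and since the construction is purely local the absence of hypotheses on the Riemannian structure (no completeness, no curvature bound) is irrelevant. The only mildly delicate point is to verify that the normalizing $\sqrt{|df|^2+1/n^2}$ keeps $|\alpha_n|_\infty \leq 1$ while $(df,\alpha_n)_x \to |df|_x$ monotonically; this is why I would use this particular regularization rather than a simple pointwise sign.
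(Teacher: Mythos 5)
Your proof is correct and takes essentially the same route as the paper: the upper bound via the adjoint identity and fiberwise Cauchy--Schwarz, and the lower bound by testing against a compactly supported, normalized version of $\Id f$. The only difference is the technical device in the lower bound: the paper cuts off inside the open set $\{\Id f\neq 0\}$ and uses $\psi_n \Id f/|\Id f|$ with Fatou's lemma, while you regularize with $\sqrt{|\Id f|^2+1/n^2}$ and use monotone convergence (for which you should take the cutoffs $\chi_n$ nested, or simply fall back on Fatou); both work equally well.
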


\begin{proof} Here, $\mathrm{Var}(f)\leq \cdots$ is clear from 
\[
\int_M  \overline{ f(x)}\Id^{\dagger} \alpha(x)  \mathrm{vol}(\Id x)=
\int_M  (\Id f(x), \alpha(x))_x  \mathrm{vol}(\Id x)\>\>
\text{ for any $ \alpha\in\Omega^1_{\mathsf{C}^{\infty}_0}(M)$}.
\]
In order to prove $\mathrm{Var}(f)\geq \cdots$, note that $\{\Id f\neq 0\}\subset M$
is an open subset, 
so that there is a sequence $(\psi_n)\subset \mathsf{C}^{\infty}_0(\{\Id f\neq 0\})$
such that 
$0\leq \psi_n\leq 1$ and $\psi_n\to 1$ as $n\to\infty$ pointwise. Then 
\[
\alpha_n:=\f{\psi_n }{|\Id f|}\Id f\in\Omega^1_{\mathsf{C}^{\infty}_0}(\{\Id f\neq 0\})
\subset\Omega^1_{\mathsf{C}^{\infty}_0}(M),
\]
one has $(\Id f,\alpha_n)  \geq 0$, $\left\|\alpha_n\right\|_{\infty}\leq 1$, and we
get
\begin{align}
\int_M |\Id f(x)|_x \mathrm{vol}(\Id x)&
\leq \liminf_{n\to\infty} \int_M (\Id f(x),\alpha_n(x))_x   \mathrm{vol}(\Id x)
\nn\\
&= \liminf_{n\to\infty} \int_M  \overline{ f(x)}\Id^{\dagger} \alpha_n(x) 
\mathrm{vol}(\Id x)
\leq \mathrm{Var}(f), \nn
\end{align}
where we have used Fatou\rq{}s lemma.
\end{proof}

We continue with the distributional properties of the differential of bounded variation
functions: 
In the $m$-dimensional Euclidean situation, it is known (see for example Corollary
\ref{clas-m} below) 
that there is a bijection between functions with bounded variation and the Banach
space of classical 
$\IC^m$-valued Borel measures. On a nonparallelizable manifold this need not be the
case anymore. 
However, we found a \emph{global} statement, which is formulated as Theorem
\ref{Bvmeasures} below, 
and which locally produces precisely the above statements (see also Proposition
\ref{rm}). The essential 
idea is to use a Banach space of \lq\lq{}generalized vector measures on $M$\rq\rq{}.\\
To this end, let $\mathsf{C}_{\infty}$ stand for the class of sections in Hermitian 
vector bundles that vanish at infinity. Clearly, $\Omega^1_{\mathsf{C}_{\infty}}(M)$
becomes a 
complex Banach space with respect to $\left\|\bullet\right\|_{\infty}$. 

\begin{Definition} \label{vm}
The Banach dual $(\Omega^1_{\mathsf{C}_{\infty}}(M))^*$ is called the \emph{space of
generalized vector 
measures on $M$.}
\end{Definition}
We denote the canonical norm on the space of generalized vector measures with 
$\left\|\bullet\right\|_{\infty,*}$.\\
Using Friedrichs-mollifiers and a standard partition of unity argument one finds
that the elements 
of $\mathsf{C}_0(M)$ can be approximated in $\left\|\bullet\right\|_{\infty}$ by
$\mathsf{C}^{\infty}_0(M)$. 
Using now that $\mathsf{C}_0(M)$ is dense $\mathsf{C}_{\infty}(M)$, together with a
second localization 
argument, one gets that $\Omega^1_{\mathsf{C}^{\infty}_0}(M)$ is dense in
$\Omega^1_{\mathsf{C}_{\infty}}(M)$. 
Thus whenever a linear functional $T$ on $\Omega^1_{\mathsf{C}^{\infty}_0}(M)$
satisfies
\[
\left\|T\right\|_{\infty,*}:=\left. \sup\Big\{  
\left|T(\alpha)\right| \right|\alpha\in \Omega^1_{\mathsf{C}^\infty_0}(M),
\|\alpha\|_\infty \leq 1\Big\}<\infty,
\]
it can be uniquely extended to an element of $(\Omega^1_{\mathsf{C}_{\infty}}(M))^*$
with the same norm.\\
Let us furthermore denote with $\IMM(M)$ the space of equivalence classes
$[(\mu,\sigma)]$ of pairs 
$(\mu,\sigma)$ with $\mu$ a finite positive Borel measure on $M$ and $\sigma$ a
Borel section in 
$\mathrm{T}^* M$ with $|\sigma|=1$ $\mu$-a.e. in $M$, where
\begin{align}
(\mu,\sigma)\sim (\mu\rq{},\sigma\rq{}): \ \Leftrightarrow \ &\mu=\mu\rq{}\>\text{
as Borel measures}
\nn\\
& \text{ and $\sigma(x)=\sigma\rq{}(x)$ for $\mu /\mu\rq{}$ a.e. $x\in M$}.\nn
\end{align}

Now we can formulate the following structure theorem, which in particular gives a
global justification of 
Definition \ref{vm}:

\begin{Theorem} \label{Bvmeasures}
\emph{a)} The map
\begin{align}
&\Psi: \IMM(M)\longrightarrow
(\Omega^1_{\mathsf{C}_{\infty}}(M))^*,\>\>\>\Psi[(\mu,\sigma)](\alpha):= 
\int_M (\sigma,\alpha) \Id  \mu \nn
\end{align}
is a well-defined bijection with
$\left\|\Psi[(\mu,\sigma)]\right\|_{\infty,*}=\mu(M)$.\vspace{1.2mm}


\emph{b)} A function $f\in\mathsf{L}^1_{\mathrm{loc}}(M)$ has bounded variation if
and only if 
$\left\|\Id f\right\|_{\infty,*}<\infty$, and then one has 
$\mathrm{Var}(f)=\left\|\Id f\right\|_{\infty,*}$. 
\end{Theorem}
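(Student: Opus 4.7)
The proof splits naturally into the two parts of the statement, and I would treat \emph{(a)} as the main work; \emph{(b)} essentially reduces to \emph{(a)} combined with the density of $\Omega^1_{\mathsf{C}^\infty_0}(M)$ in $\Omega^1_{\mathsf{C}_\infty}(M)$ that has already been recorded.

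For part \emph{(a)}, the plan is a four-step program: well-definedness, the norm equality, injectivity, and surjectivity. \emph{Well-definedness}: the integral $\int_M(\sigma,\alpha)\,\Id\mu$ depends only on the equivalence class $[(\mu,\sigma)]$, since altering $\sigma$ on a $\mu$-null set does not change the integral, and $\mu = \mu'$ as measures means literal equality of the integrals. Continuity is immediate from $|(\sigma,\alpha)| \le |\alpha|_\infty$ a.e., which yields $\|\Psi[(\mu,\sigma)]\|_{\infty,*}\le\mu(M)$. \emph{Norm equality}: the reverse inequality is obtained by approximating $\sigma$ by smooth compactly supported $1$-forms in $\mathsf{L}^1(\mu)$. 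Concretely, since $M$ is $\sigma$-compact and metrizable every finite Borel measure is Radon, so for each $\varepsilon>0$ one fixes a compact $K$ with $\mu(M\setminus K)<\varepsilon$; on $K$ the Borel section $\sigma$ with $|\sigma|=1$ can be approximated in $\mathsf{L}^1(\mu)$ by a sequence $\alpha_n\in\Omega^1_{\mathsf{C}^\infty_0}(M)$ with $\|\alpha_n\|_\infty\le 1$ (standard Lusin-plus-mollification in local trivializations, glued by a partition of unity), and then $\Psi[(\mu,\sigma)](\alpha_n)\to\int_M|\sigma|^2\,\Id\mu=\mu(M)$.

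\emph{Injectivity}: if $\Psi[(\mu,\sigma)]=\Psi[(\mu',\sigma')]$, then testing against $\alpha_n$ as above (built to approximate $\sigma$), I would deduce $\mu(M)\le\mu'(M)$ and, by symmetry, equality; then testing against $\varphi\alpha_n$ for $\varphi\in\mathsf{C}^\infty_0(M,\mathbb R)$ gives $\int\varphi\,\Id\mu=\int\varphi(\sigma,\sigma')\,\Id\mu'$, from which first $\mu=\mu'$ (by varying $\varphi$ with $|(\sigma,\sigma')|\le 1$ and exploiting the equality of total masses) and then $(\sigma,\sigma')=1$ $\mu$-a.e., which combined with $|\sigma|=|\sigma'|=1$ forces $\sigma=\sigma'$ $\mu$-a.e. by Cauchy--Schwarz equality. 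The genuine obstacle is \emph{surjectivity}, and this is where I expect to draw on the abstract material on vector measures on locally compact spaces that the authors announce in the appendix. The natural strategy is to localize: pick a locally finite cover $(U_i)$ of $M$ by relatively compact charts on which $\mathrm{T}^*M$ is trivialized, so that $T$ restricted to compactly supported sections over $U_i$ corresponds to an $m$-tuple of bounded linear functionals on $\mathsf{C}_0(U_i)$; the classical Riesz representation delivers $m$ complex Borel measures, hence a $\mathbb{C}^m$-valued Borel measure $\nu_i$ on $U_i$, which by polar decomposition is of the form $\sigma_i\cdot|\nu_i|$ with $|\sigma_i|=1$ a.e. The task is then to glue the pairs $(|\nu_i|,\sigma_i)$ via a partition of unity into a single pair $(\mu,\sigma)$ and to verify independence of the chosen trivializations — this is where polar decomposition on a locally compact Hausdorff space and the invariance under unitary changes of frame are essential, and I would simply invoke the appendix.

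For part \emph{(b)}, define the functional $T_f\colon\Omega^1_{\mathsf{C}^\infty_0}(M)\to\IC$ by $T_f(\alpha):=\int_M\overline{f(x)}\,\Id^\dagger\alpha(x)\,\mathrm{vol}(\Id x)$; Definition \ref{vari} identifies $\mathrm{Var}(f)$ with the supremum $\|T_f\|_{\infty,*}$, and interpreting $\Id f$ as the distribution $T_f$, this is exactly $\|\Id f\|_{\infty,*}$. The density of $\Omega^1_{\mathsf{C}^\infty_0}(M)$ in $\Omega^1_{\mathsf{C}_\infty}(M)$ proved just before the statement guarantees that whenever this quantity is finite, $T_f$ extends uniquely to a bounded functional on $\Omega^1_{\mathsf{C}_\infty}(M)$ of the same norm, so that $\Id f$ lives in $(\Omega^1_{\mathsf{C}_\infty}(M))^*$ in the sense of \emph{(a)}. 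The equivalence and the norm equality are then immediate.
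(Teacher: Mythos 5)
Your overall architecture is viable, and several pieces (well-definedness, the norm equality via Lusin-type approximation of $\sigma$, and all of part (b)) match the paper in substance; part (b) in particular is exactly the paper's argument. But there are two concrete gaps in part (a). First, in the injectivity step you pass to the limit in $\int_M \varphi(\sigma',\alpha_n)\,\Id\mu'$ and claim the limit is $\int_M\varphi(\sigma',\sigma)\,\Id\mu'$. Your $\alpha_n$ were constructed to approximate $\sigma$ in $\mathsf{L}^1(\mu)$ only; if $\mu$ and $\mu'$ are mutually singular this says nothing about the behaviour of $\alpha_n$ relative to $\mu'$, so the identity $\int\varphi\,\Id\mu=\int\varphi(\sigma,\sigma')\,\Id\mu'$ is unjustified as stated. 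The fix is easy (approximate a fixed Borel representative of $\sigma$ in $\mathsf{L}^1(\mu+\mu')$), but the step fails as written. The paper avoids this by first proving $\mu=\mu'$ directly --- expressing $\mu(N)$ for every Borel $N$ as a supremum of $|T(\beta)|$ over Borel forms $\beta$ with $|\beta|\le 1_N$, a quantity manifestly symmetric in the two representations --- and only then extending $T$ to bounded Borel sections and applying it to $\sigma$ itself to obtain $(\sigma,\sigma')=1$ a.e.

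Second, and more seriously, your surjectivity argument leans on a gluing step that the appendix does not actually supply: Proposition \ref{rm} only records the Riesz representation of $(\mathsf{C}_{\infty}(X,\IK^m))^*$ and the $\IC^m\leftrightarrow\IR^{2m}$ identification; it says nothing about patching the local total variation measures $|\nu_i|$ into one finite positive measure on $M$. For that you must (i) use \emph{orthonormal} frames so that the transition functions are pointwise unitary and the local total variations agree on overlaps, and (ii) prove that the glued measure is finite --- with a locally finite but infinite cover, the total mass can a priori diverge even though each $|\nu_i|(U_i)$ is bounded by $\left\|T\right\|_{\infty,*}$. The paper dissolves both issues at once by constructing the global measure \emph{first}: the functional $f\mapsto\sup\{|T(f\alpha)| : \|\alpha\|_\infty\le 1\}$ on $0\le f\in\mathsf{C}_{\infty}(M)$ is shown to be additive (via the rotation trick converting the supremum of moduli into a supremum of real parts), hence extends to a positive bounded functional, and Riesz--Markoff yields a single finite $\mu$; the components of $\sigma$ are then obtained by $\mathsf{L}^1(\mu)$--$\mathsf{L}^{\infty}(\mu)$ duality relative to this one $\mu$ over a \emph{finite} trivializing cover, so no compatibility of locally defined measures ever has to be checked. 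If you want to keep the local-Riesz route you must supply (i) and (ii) yourself; they are not in the appendix.
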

\begin{proof} a) Clearly, $\Psi$ is a well-defined map. We divide the proof into
three parts:\vspace{1mm}

1.\emph{$\Psi$ is surjective}: Let $T$ be a generalized vector measure and consider
the functional given by
\begin{equation}\label{mapsto}
\tilde{\tilde{T}}(f): =\left. \sup\Big\{  \left|T(f\alpha)\right| 
\right|\alpha\in \Omega^1_{\mathsf{C}_\infty}(M),
\|\alpha\|_\infty \leq 1\Big\}, \>\>0\leq f\in \mathsf{C}_{\infty}(M).
\end{equation}
For every test form $\alpha\in \Omega^1_{\mathsf{C}_{\infty}}(M)$ and 
every $0\leq f\in \mathsf{C}_{\infty}(M)$ with $T(f\alpha)\ne 0$ set 
\[
z = \frac{\overline{T(f\alpha)}}{|T(f\alpha)|}\in \IC,
\]
whence 
\[
|T(f\alpha)|=z T(f\alpha) = 
T(zf \alpha)=\mathrm{Re}\Bigl(T(f z \alpha)\Bigr). 
\]
Since $|z|=1$, $z\alpha$ is again an admissible test 
form as well, and we get
\[
|T(f\alpha)| \leq \sup\left\{ \mathrm{Re}\Bigl(T(f\omega)\Bigr)  \Big|
\> \omega\in \Omega^1_{\mathsf{C}_\infty}(M),\ \|\omega\|_\infty \leq 1\right\},
\] 
so that
\[
\tilde{\tilde{T}}(f) = \sup\left\{ \left. \mathrm{Re}\Bigl(T(f\alpha)\Bigr) \right| 
\alpha\in \Omega^1_{\mathsf{C}_\infty}(M),\ \|\alpha\|_\infty \leq 1\right\}.
\]
Thus from demposing the real- and imaginary parts of functions into their positive
and negative parts, 
it follows that $\tilde{\tilde{T}}$ has a unique extension to a positive bounded
linear functional 
$\tilde{T}$ on $\mathsf{C}_\infty(X)$, and by Riesz-Markoff\rq{}s theorem (see
Proposition \ref{rm} a)), 
there is a unique finite positive Borel measure $\mu$ on $M$ such that $\int_M f \Id
\mu = \tilde{T}(f)$ 
for all $f\in \mathsf{C}_{\infty}(M)$. Furthermore, $\mu$ satisfies
\[
\mu(M)=\sup \left\{\left.|\tilde{T}(f)|\ \right|\ f\in\mathsf{C}_{\infty}(M), 
\|f\|_\infty \leq 1 \right\}<\infty.
\]
By the paracompactness of $M$ there is a finite open cover $\bigcup^d_{l=1} U_l=M$
(with $U_l$ possibly 
disconnected and with noncompact closure) such that for any $l$ there is an
orthonormal basis 
$e^{(l)}_1,\dots,  e^{(l)}_m\in\Omega^1_{\mathsf{C}^{\infty}}(U_l)$. We also take a
partition of 
unity $(\psi_l)$ subordinate to $(U_l)$, that is, $\psi_l\in\mathsf{C}^{\infty}(M)$,
$0\leq \psi_l\leq 1$ 
and $\sum_l\psi_l=1$ pointwise, and $\mathrm{supp}(\psi_l)\subset U_l$. Then by the
above considerations, 
the assignment $f\mapsto T(f\psi_l e^{(l)}_j)$, $f\in \mathsf{C}_{\infty}(U_l)$,
extends to a bounded 
linear functional on $\mathsf{L}^1(U_l,\mu)$ with norm $\leq \mu(M)$, so that there
is a 
$\sigma^{(l)}_{j}\in \mathsf{L}^{\infty}(U_l,\mu)$ with $|\sigma^{(l)}_{j}|\leq 1$
$\mu$-a.e. such that
\[
T(f e^{(l)}_j)=\int_{U_l} \sigma^{(l)}_{j}(x)f(x) \mu(\Id x)\>
\text{ for any $f\in \mathsf{C}_{\infty}(U_l)$}.
\]
Now it is easily checked that the Borel $1$-form
$\sigma:=\sum^d_{l=1}\psi_l\sum^m_{j=1}  
\sigma^{(l)}_{j} e^{(l)}_j$ on $M$ satisfies $|\sigma|\leq 1$ $\mu$-a.e. and 
$T=\int_M (\sigma,\bullet)\Id \mu$. Finally, since 
\begin{align}
&\int_M|\sigma(x)|_x\mu(\Id x) \nn\\
&\geq \sup\left. \left\{\left| \int_M (\sigma,\alpha)\Id\mu\right| \>\right|\ 
\alpha\in \Omega^1_{\mathsf{C}_\infty}(M),\ 
\|\alpha\|_\infty \leq 1\right\} \nn\\
&=\sup \left\{\left.|\tilde{T}(f)|\ \right|\ f\in\mathsf{C}_{\infty}(M),
\|f\|_\infty \leq 1 \right\}\nn\\
&= \mu(M),\nn
\end{align}
we get $|\sigma|=1$ $\mu$-a.e., which completes the proof of the asserted
surjectivity of $\Psi$. \vspace{1mm}

2.\emph{$\Psi$ is injective}: If
$\Psi[(\mu,\sigma)](\alpha)=\Psi[(\mu\rq{},\sigma\rq{})](\alpha)$ for all 
$\alpha \in \Omega^1_{\mathsf{C}_{\infty}}(M)$, then one has $\mu=\mu'$ as Borel
measures: Indeed, using the 
above notation, for every Borel set $N\subset M$ we have 
\begin{align}
&\mu(N) \nn\\
&= \sup \left\{\left.\widetilde{\widetilde{\Psi[(\mu,\sigma)]}}(f)\ \right|\
f\in\mathsf{C}_{\infty}(M), 0\leq f\leq 1_N \right\}
\nn\\
&= \sup\left. \Big\{  \left| \Psi[(\mu,\sigma)](f\alpha)\right|\ \right|\
f\in\mathsf{C}_{\infty}(M),\alpha\in\Omega^1_{\mathsf{C}_{\infty}}(M),|\alpha|\leq
1, 0\leq f\leq1_N\Big\}
\nn\\
&=\sup \left.\Big\{  \left|\Psi[(\mu,\sigma)](\beta)\right|\ \right|\
\beta\in\Omega^1_{\mathsf{C}_{\infty}}(M), |\beta|\leq 1_N\Big\}
\nn\\
&=\sup\left\{ \left.\left| \int_M (\sigma',\beta) \Id \mu'\right|\ \right|\
\beta\in\Omega^1_{\mathsf{C}_{\infty}}(M),  |\beta|\leq 1_N\right\}
\nn\\
& \leq \mu'(N).\label{eq}
\end{align}
Exchanging $\mu$ with $\mu'$ we get $\mu(N)=\mu'(N)$ for all Borel sets $N\subset
M$, as claimed. 
To see that $\sigma=\sigma'$ $\mu/\mu'$-a.e., let us observe that
$T:=\Psi[(\mu,\sigma)]$ extends to 
$\Omega^1_{{\mathsf B}_b}(M)$, where ${\mathsf B}_b$ denotes the space of bounded
Borel functions, as  
for every $\alpha \in \Omega^1_{{\mathsf B}_b}(M)$ the function
$x\mapsto(\alpha(x),\sigma(x))_x$ on $M$ belongs 
to $\mathsf{L}^1(M,\mu)$ and we may define $T(\alpha)=\int_M(\alpha,\sigma)\Id\mu$. 
Therefore, we may apply $T$ to $\sigma$ and use that
$T(\sigma)=\Psi[(\mu',\sigma')](\sigma)$ 
and the equality $\mu=\mu'$ to get  
\[
\mu(M) = T(\sigma) = \int_M(\sigma,\sigma')\Id\mu' = \int_M(\sigma,\sigma')\Id\mu 
\leq \int_M|(\sigma,\sigma')|\Id\mu 
\]
and the equality $|(\sigma,\sigma')|=1$ $\mu$-a.e follows. Since $|\sigma|\leq 1$
and $|\sigma'|\leq 1$ we 
deduce $\sigma=\sigma'\ \mu$-a.e.

\vspace{1mm}

3.\emph{ One has $\left\|\Psi[(\mu,\sigma)]\right\|_{\infty,*}=\mu(M)$}: Indeed,
this follows from the bijectivity of $\Psi$ and the proof of the surjectivity.

b) If $\left\|\Id f\right\|_{\infty,*}<\infty$, then clearly one has 
$\mathrm{Var}(f)\leq \left\|\tilde{\Id f}\right\|_{\infty,*}$ by the very definition
of $\Id f$, namely, 
\[
\Id f(\alpha) = \int_M \overline{f(x)}\Id^\dagger \alpha(x)\mathrm{vol}(\Id x)\>
\text{ for any $\alpha\in\Omega^1_{\mathsf{C}^{\infty}_0}(M)$.}
\]
Conversely, if $\mathrm{Var}(f)$ is finite, then by a homogeneity argument, one has
the following estimate 
\[
\left|\int_M \overline{f(x)}\Id^\dagger \alpha(x)\mathrm{vol}(\Id x)\right| 
\leq \mathrm{Var}(f)\|\alpha\|_\infty\>\text{ for any
$\alpha\in\Omega^1_{\mathsf{C}^{\infty}_0}(M)$,}
\]
which implies $\left\|\Id f\right\|_{\infty,*}\leq \mathrm{Var}(f) <\infty$.\end{proof}

If $f$ has bounded variation we will write $|\mathrm{D}f|$ for the measure, and $\sigma_f$ for the $|\mathrm{D}f|$-equivalence class of sections corresponding to $\Psi^{-1}(\Id
f)$, so that we have 
\begin{equation}\label{sigma_fdf}
\Id f(\alpha) = \int_M  (\sigma_f(x), \alpha(x))_x |\mathrm{D}f|(\Id x)\>\>
\text{ for any $ \alpha\in\Omega^1_{\mathsf{C}^{\infty}_0}(M)$}.
\end{equation}
We directly recover the following complex variant of a classical result, which in
particular states that 
locally, complex-valued bounded variation functions can be considered as
$\IR^2$-valued bounded variation 
functions and vice versa:

\begin{Corollary}\label{clas-m} 
Let $M=U$, with $U\subset\IR^m$ a domain with its Euclidean metric, and let
$f\in\mathsf{L}^1_{\mathrm{loc}}(U)$. 
Then one has
\begin{align}
&\mathrm{Var}(f) = \nn\\ 
&\sup\left\{\left. \int_{U}\Big( \mathrm{Re}(f)\mathrm{div}\alpha_1+
\mathrm{Im}(f)\mathrm{div}\alpha_2\Big)\mid_x\Id x
\>\right|    \alpha\in[\mathsf{C}^{\infty}_{0,\IR}(U)]^{2m}, \|\alpha\|_{\infty}\leq
1\right\},\label{char1}
\end{align}
where in the above set any $\alpha\in[\mathsf{C}^{\infty}_{0,\IR}(U)]^{2m}$ is
written as 
$\alpha=(\alpha_1,\alpha_2)$ with $\alpha_j\in[\mathsf{C}^{\infty}_{0,\IR}(U)]^{m}$.
Furthermore, $f$ 
has bounded variation if and only if there is a (necessarily unique) $\IC^m$-valued
Borel measure 
$\mathrm{D}f$ on $U$ such that\footnote{Here, $|\mathrm{D}f|$ stands for the total variation measure corresponding to the vector measure $\mathrm{D} f$ (cf. Section \ref{complexmeas}). Of course this notation is consistent with (\ref{sigma_fdf}).} $\mathrm{grad} f= \mathrm{D} f$ as distributions, and
then it holds 
that $\mathrm{Var}(f) = |\mathrm{D}f|(U)$.
\end{Corollary}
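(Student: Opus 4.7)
The plan is to reduce both parts to Theorem \ref{Bvmeasures} applied in the Euclidean setting, where the cotangent bundle is globally trivialized. A complex $1$-form $\omega\in\Omega^1_{\mathsf{C}^\infty_0}(U)$ decomposes uniquely as $\omega=\beta_1+i\beta_2$ with $(\beta_1,\beta_2)\in[\mathsf{C}^\infty_{0,\IR}(U)]^{2m}$; in Euclidean coordinates $\Id^\dagger\omega=-\mathrm{div}\beta_1-i\,\mathrm{div}\beta_2$ and $|\omega(x)|_x^2=|\beta_1(x)|^2+|\beta_2(x)|^2$, so $\|\omega\|_\infty\leq 1$ is equivalent to the Euclidean bound $\|(\beta_1,\beta_2)\|_\infty\leq 1$.

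For \eqref{char1} the essential ingredient is a \emph{phase rotation trick}. Given admissible $\omega$, set $z:=\int_U\overline{f}\,\Id^\dagger\omega\,\Id x$ and pick $\theta\in[0,2\pi)$ with $e^{-i\theta}z=|z|$. The rotated form $\omega':=e^{-i\theta}\omega$ has the same sup norm and is therefore still admissible; decomposing $\omega'=\alpha_1'+i\alpha_2'$ and equating real parts gives
\begin{equation*}
|z|\;=\;-\int_U\Big(\mathrm{Re}(f)\,\mathrm{div}\alpha_1'+\mathrm{Im}(f)\,\mathrm{div}\alpha_2'\Big)\Id x,
\end{equation*}
so that replacing $(\alpha_1',\alpha_2')$ by its negative produces a test tuple admissible for \eqref{char1} that attains value $|z|$. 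Taking sups over $\omega$ yields one inequality. The reverse inequality comes from the inverse association $(\alpha_1,\alpha_2)\mapsto\omega:=-\alpha_1-i\alpha_2$, using $|\mathrm{Re}\zeta|\leq|\zeta|$ applied to $\zeta=\int_U\overline{f}\,\Id^\dagger\omega\,\Id x$, together with the freedom to flip signs of $(\alpha_1,\alpha_2)$ inside the sup of \eqref{char1}.

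For the measure statement I would invoke Theorem \ref{Bvmeasures}: $f$ has bounded variation iff $\Id f$ extends to a generalized vector measure, and then $\Id f=\Psi[(|\mathrm{D}f|,\sigma_f)]$ with $\mathrm{Var}(f)=|\mathrm{D}f|(U)$. Under the Euclidean trivialization, $\sigma_f$ becomes a $|\mathrm{D}f|$-a.e.\ unit $\IC^m$-valued Borel function, and the classical $\IC^m$-valued Borel measure $\mathrm{D}f(E):=\int_E\sigma_f\,\Id|\mathrm{D}f|$ on $U$ satisfies $\mathrm{grad}\,f=\mathrm{D}f$ as distributions, which is \eqref{sigma_fdf} rewritten in coordinates. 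Conversely, any $\IC^m$-valued Borel measure admits a polar decomposition $\mathrm{D}f=\sigma|\mathrm{D}f|$ with $|\sigma|=1$ a.e.\ (cf.\ Section \ref{complexmeas}), giving back a representative in $\IMM(U)$; uniqueness of $\mathrm{D}f$ follows from injectivity of $\Psi$. The main obstacles are purely bookkeeping: tracking the sign in $\Id^\dagger=-\mathrm{div}$, verifying the identity $|a+ib|^2=|a|^2+|b|^2$ that matches the Hermitian norm on $1$-forms with the Euclidean norm on the real $2m$-tuple, and checking that the total variation norm of $\mathrm{D}f$ as a classical vector measure coincides with $|\mathrm{D}f|(U)=\mu(U)$ appearing in Theorem \ref{Bvmeasures}.
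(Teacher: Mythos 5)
Your argument is correct, but for the identity \eqref{char1} it takes a genuinely different route from the paper. The paper derives \eqref{char1} as a corollary of the measure statement: it first produces $\mathrm{D}f$ from Theorem \ref{Bvmeasures} with $\mathrm{Var}(f)=|\mathrm{D}f|(U)$, then uses Proposition \ref{rm} b) to identify $|\mathrm{D}f|(U)$ with $|(\mathrm{D}f)_{\IR^{2m}}|(U)$, and finally cites the classical Euclidean fact (Proposition 3.6 in \cite{ambro}) that the latter equals the supremum in \eqref{char1}, with a separate remark to handle the case $\mathrm{Var}(f)=\infty$. You instead prove the equality of the two suprema directly by the phase-rotation trick: replacing $\omega$ by $e^{-i\theta}\omega$ makes $\int_U\overline{f}\,\Id^{\dagger}\omega\,\Id x$ real and nonnegative without changing $\|\omega\|_{\infty}$, and the identities $\Id^{\dagger}(\beta_1+i\beta_2)=-\mathrm{div}\beta_1-i\,\mathrm{div}\beta_2$ and $|\beta_1+i\beta_2|^2=|\beta_1|^2+|\beta_2|^2$ match the two admissible classes exactly. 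This is more elementary and self-contained (no appeal to the structure theorem, to Proposition \ref{rm} b), or to the cited Euclidean result for this part), and it treats the finite- and infinite-variation cases uniformly; what the paper's route buys is brevity by leaning on the established real-valued theory, and it makes the link $\mathrm{Var}(f)=|\mathrm{D}f|(U)=|(\mathrm{D}f)_{\IR^{2m}}|(U)$ explicit, which is also the content behind Remark \ref{clas-1}. For the ``furthermore'' part your argument coincides with the paper's (Theorem \ref{Bvmeasures} plus the Euclidean trivialization and polar decomposition); just make sure you actually carry out the bookkeeping you defer, in particular that $|\mathrm{D}f|(E)=\int_E|\sigma_f|\,\Id\mu=\mu(E)$ so that the total variation of the classical vector measure agrees with the measure $\mu$ produced by the structure theorem, and that the sign conventions in $\Id^{\dagger}=-\mathrm{div}$ and the conjugation in Definition \ref{vari} are consistent with the stated distributional identity $\mathrm{grad}f=\mathrm{D}f$.
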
 

\begin{proof} If $f$ has bounded variation, then with standard identifications,
Theorem \ref{Bvmeasures} 
implies the existence of $\mathrm{D}f$ in a way that $\mathrm{Var}(f) =
|\mathrm{D}f|(U)$, and Proposition 
\ref{rm} b) implies $|\mathrm{D}f|(U)= |(\mathrm{D}f)_{\IR^{2m}}|(U)$, where
$|(\mathrm{D}f)_{\IR^{2m}}|(U)$ 
is well-known to be equal to the supremum in (\ref{char1}) (see for example the
proof of Proposition 3.6 in 
\cite{ambro}). If $f$ has infinite variation, then we can conclude analogously.
\end{proof}

\begin{Remark}\label{clas-1} 
Note that in the situation of Corollary \ref{clas-m}, the equality of
$\mathrm{Var}(f)$ to the supremum 
in (\ref{char1}) is not immediate from Definition \ref{vari}, where a complex
absolute value appears. But 
it is precisely this equality that makes our definition of variation the natural one
in the case of 
complex-valued functions on the most fundamental level, which is the case of
functions of one 
variable: Indeed, let $I\subset \IR$ be an open interval. Then the characterization
of the variation by 
the supremum in (\ref{char1}) combined with 
Theorem 3.27 in \cite{ambro} (this is a highly nontrivial fact) implies that for any 
$f\in \mathsf{L}^1_{\mathrm{loc}}(I)$ one has 
\begin{align}
\mathrm{Var}(f) = \inf_{f(\bullet)\in f}\sup\left\{\left.
\sum^{n-1}_{j=1}|f(x_{j+1})-f(x_{_j})|\>
\right|\text{$n\geq 2$, $x_1<x_2\dots<x_n$}\right\}.\label{asa}
\end{align}
Note here that $f$ is an equivalence class, so the infimum in \eqref{asa} is taken
among all functions 
coinciding with $f$ a.e. in $I$. Indeed, $\sup\{\dots\}$ depends heavily on the
particular representative 
of $f(\bullet):I\to \IC$ of $f$. 
\end{Remark}

We continue with our general observations. Let $p\in [1,\infty)$ and recall that a
countable system of 
seminorms on $\mathsf{L}^p_{\mathrm{loc}}(M)$ is given through $f\mapsto \int_{K_n}
|f|^p\Id \mu$, where 
$(K_n)$ is an exhaustion of $M$ with relatively compact domains and $\mu$ is a
smooth positive Borel measure 
on $M$, that is, the restriction of $\mu$ to an arbitrary chart has a positive
smooth density function with 
respect to the $m$-dimensional Lebesgue measure. Then the corresponding locally
convex topology does not depend 
on the particular choice of $(K_n)$ and $\mu$, and furthermore for any fixed
$\psi\in\mathsf{C}^{\infty}_0(M)$, 
the map 
\[
\mathsf{L}^{1}_{\mathrm{loc}}(M)\longrightarrow [0,\infty), \> 
f\longmapsto \left|\int_M   \overline{f(x)}\psi(x)\mathrm{vol}(\Id x)\right| 
\]
is continuous. This observation directly implies:

\begin{Proposition}\label{low} For any $p\in [1,\infty)$ the maps 
\begin{align}
\mathsf{L}^p_{\mathrm{loc}}(M)\longrightarrow [0,\infty], \>\>f\longmapsto
\mathrm{Var}(f)\nn\\
\mathsf{L}^p(M)\longrightarrow [0,\infty], \>\>f\longmapsto \mathrm{Var}(f)\nn
\end{align}
are lower semicontinuous with respect to the corresponding canonical topologies.
\end{Proposition}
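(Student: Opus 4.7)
The plan is to exploit the definition of $\mathrm{Var}$ as a supremum and invoke the standard fact that a supremum of a family of continuous (indeed, lower semicontinuous) functions is lower semicontinuous. Concretely, for each fixed test form $\alpha\in\Omega^1_{\mathsf{C}^{\infty}_0}(M)$ with $\|\alpha\|_\infty\leq 1$, set $\psi_\alpha:=\Id^{\dagger}\alpha\in\mathsf{C}^{\infty}_0(M)$ and consider the functional
\[
F_\alpha(f):=\left|\int_M \overline{f(x)}\,\psi_\alpha(x)\,\mathrm{vol}(\Id x)\right|.
\]
By definition, $\mathrm{Var}(f)=\sup_\alpha F_\alpha(f)$, so it suffices to show each $F_\alpha$ is continuous on $\mathsf{L}^p_{\mathrm{loc}}(M)$ (respectively on $\mathsf{L}^p(M)$) for every $p\in[1,\infty)$.

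Since $\psi_\alpha$ has compact support $K\subset M$ and $K$ has finite Riemannian volume, Hölder's inequality gives
\[
|F_\alpha(f_1)-F_\alpha(f_2)|\leq \|\psi_\alpha\|_\infty \int_K |f_1-f_2|\,\mathrm{vol}(\Id x)\leq \|\psi_\alpha\|_\infty\,\mathrm{vol}(K)^{1/p'}\Bigl(\int_K |f_1-f_2|^p\,\mathrm{vol}(\Id x)\Bigr)^{1/p},
\]
where $p'$ is the Hölder conjugate of $p$ (with the usual convention if $p=1$). This is precisely the continuity observation anticipated in the paragraph preceding the statement: the right hand side is controlled by one of the defining seminorms of the $\mathsf{L}^p_{\mathrm{loc}}$-topology (take $K_n\supset K$ in the exhaustion, using a smooth positive measure comparable to $\mathrm{vol}$ on $K$), hence $F_\alpha$ is continuous on $\mathsf{L}^p_{\mathrm{loc}}(M)$. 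Since the inclusion $\mathsf{L}^p(M)\hookrightarrow \mathsf{L}^p_{\mathrm{loc}}(M)$ is continuous, the continuity of $F_\alpha$ on $\mathsf{L}^p(M)$ follows immediately.

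Taking the pointwise supremum over all admissible $\alpha$ preserves lower semicontinuity, which yields both asserted statements. There is no real obstacle here; the only minor care is in reading the $\mathsf{L}^p_{\mathrm{loc}}$-topology correctly — namely that its seminorms are defined via an \emph{arbitrary} smooth positive Borel measure $\mu$ on $M$, but because such $\mu$ is locally comparable to $\mathrm{vol}$ on the compact set $K$, this comparison is harmless and the Hölder estimate above still bounds $|F_\alpha(f_1)-F_\alpha(f_2)|$ by a finite multiple of the relevant $\mu$-seminorm of $f_1-f_2$.
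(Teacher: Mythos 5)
Your proposal is correct and follows essentially the same route as the paper: the text preceding the proposition records exactly the continuity of $f\mapsto\bigl|\int_M\overline{f}\psi\,\mathrm{vol}(\Id x)\bigr|$ on $\mathsf{L}^p_{\mathrm{loc}}(M)$ for fixed $\psi\in\mathsf{C}^{\infty}_0(M)$, and the proposition is then deduced by writing $\mathrm{Var}$ as a supremum of such continuous functionals. Your Hölder estimate on the compact support of $\Id^{\dagger}\alpha$ simply makes explicit the continuity claim that the paper leaves as an observation.
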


We also have the following fact, which follows easily from the completeness of
$\mathsf{L}^1(M)$ 
and Proposition \ref{low}:

\begin{Proposition}\label{ban} The space
\[
\mathsf{BV}(M):=\left.\Big\{f\right| f\in\mathsf{L}^{1}(M),
\mathrm{Var}(f)<\infty\Big\}
\]
is a complex Banach space with respect to the norm 
$\left\|f\right\|_{\mathsf{BV}}:=\left\|f\right\|_1+\mathrm{Var}(f)$.
\end{Proposition}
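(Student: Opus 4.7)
The plan is to proceed by the usual Cauchy-sequence argument for $\mathsf{BV}$-type spaces, with the only nontrivial ingredient being lower semicontinuity from Proposition \ref{low}.

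First I would verify that $\left\|\bullet\right\|_{\mathsf{BV}}$ really is a norm on $\mathsf{BV}(M)$. Positive-definiteness is inherited from $\left\|\bullet\right\|_1$, while absolute homogeneity $\mathrm{Var}(\lambda f)=|\lambda|\mathrm{Var}(f)$ and the triangle inequality $\mathrm{Var}(f+g)\leq\mathrm{Var}(f)+\mathrm{Var}(g)$ follow immediately from the fact that $\mathrm{Var}(\bullet)$ is defined as the supremum of the absolute values of a family of linear functionals on $\mathsf{L}^1_{\mathrm{loc}}(M)$, combined with the analogous properties of $\left\|\bullet\right\|_1$. This step is routine but needs to be pointed out.

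Next, I would take a Cauchy sequence $(f_n)\subset\mathsf{BV}(M)$ with respect to $\left\|\bullet\right\|_{\mathsf{BV}}$. In particular $(f_n)$ is Cauchy in $\mathsf{L}^1(M)$, so by completeness of $\mathsf{L}^1(M)$ there is some $f\in\mathsf{L}^1(M)$ with $f_n\to f$ in $\mathsf{L}^1(M)$. To see that $f\in\mathsf{BV}(M)$, observe that $\mathsf{L}^1$-convergence implies $\mathsf{L}^1_{\mathrm{loc}}$-convergence (hence convergence in the locally convex topology preceding Proposition \ref{low}), so Proposition \ref{low} yields
\[
\mathrm{Var}(f)\leq\liminf_{n\to\infty}\mathrm{Var}(f_n)<\infty,
\]
the last bound holding because Cauchy sequences in $\left\|\bullet\right\|_{\mathsf{BV}}$ are in particular bounded in $\mathrm{Var}$.

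Finally, I would show $f_n\to f$ in $\mathsf{BV}(M)$. Given $\varepsilon>0$, fix $N$ with $\left\|f_n-f_k\right\|_{\mathsf{BV}}<\varepsilon$ for all $n,k\geq N$. For each fixed $n\geq N$, the sequence $(f_n-f_k)_{k}$ tends to $f_n-f$ in $\mathsf{L}^1(M)$ as $k\to\infty$, so applying Proposition \ref{low} once more gives
\[
\mathrm{Var}(f_n-f)\leq\liminf_{k\to\infty}\mathrm{Var}(f_n-f_k)\leq\varepsilon,
\]
while $\left\|f_n-f\right\|_1\leq\varepsilon$ is clear from $\mathsf{L}^1$-convergence combined with the same Cauchy estimate. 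Hence $\left\|f_n-f\right\|_{\mathsf{BV}}\leq 2\varepsilon$, which completes the proof. There is no serious obstacle here; the only point that must not be glossed over is that $\mathsf{L}^1$-convergence is strong enough to feed into the lower semicontinuity statement of Proposition \ref{low}, which is exactly why that proposition was formulated for $\mathsf{L}^1_{\mathrm{loc}}$ (and \emph{a fortiori} for $\mathsf{L}^1$).
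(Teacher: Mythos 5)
Your proposal is correct and follows exactly the route the paper indicates: the paper gives no detailed proof but states that the result ``follows easily from the completeness of $\mathsf{L}^1(M)$ and Proposition \ref{low}'', and your argument is precisely the standard Cauchy-sequence expansion of that remark, using lower semicontinuity of $\mathrm{Var}$ both to place the $\mathsf{L}^1$-limit in $\mathsf{BV}(M)$ and to upgrade the convergence to the $\left\|\bullet\right\|_{\mathsf{BV}}$-norm. No gaps.
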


Next we shall record that first-order $\mathsf{L}^1$-Sobolev functions belong to
$\mathsf{BV}(M)$ with the same norm. 
To this end, for any $p\in [1,\infty)$ we denote the complex Banach space of first order $\mathsf{L}^p$-Sobolev functions with 
\[
\mathsf{W}^{1,p}(M):=\left.\Big\{f\right| f\in\mathsf{L}^{p}(M), \Id f\in
\Omega^1_{\mathsf{L}^{p}}(M)\Big\}
\]
with its canonical norm 
$\left\|f\right\|_{1,p}:=\left\|f\right\|_p+\left\|\Id f\right\|_p$. Furthermore, we
define 
$\mathsf{H}^{1,p}(M)\subset \mathsf{W}^{1,p}(M)$ as the closure of the space of
functions 
$f\in\mathsf{L}^{p}(M)\cap\mathsf{C}^{\infty}(M)$ such that $\Id f\in
\Omega^1_{\mathsf{L}^{p}}(M)$ 
with respect to $\left\|\bullet\right\|_{1,p}$. The equality
$\mathsf{W}^{1,p}=\mathsf{H}^{1,p}$ is known to hold on open subsets of the Eucliden $\IR^m$ by Meyers-Serrin\rq{}s Theorem \cite{meyers}. It seems to be unknown whether this extends to abitrary $M$. However, one has the following result under geodesic completenes, which should be known, but which we have not been able to find a direct reference for (note here that Theorem 1 in \cite{aubin} only states that $\mathsf{C}^{\infty}_0(M)$ is dense in $\mathsf{H}^{1,p}(M)$). It relies on the existence of first order cut-off functions:

\begin{Proposition}\label{densi} If $M$ is geodesically complete, then $\mathsf{C}^{\infty}_0(M)$ is dense in $\mathsf{W}^{1,p}(M)$, in particular, one has $\mathsf{W}^{1,p}(M)=\mathsf{H}^{1,p}(M)$. 
\end{Proposition}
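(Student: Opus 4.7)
The plan is to reduce the claim to the Euclidean Meyers--Serrin theorem in two steps: first approximate a general $f\in\mathsf{W}^{1,p}(M)$ by \emph{compactly supported} $\mathsf{W}^{1,p}$-functions using cut-offs, and then smooth those out via Friedrichs mollifiers in a finite atlas. The inclusion $\mathsf{C}^{\infty}_0(M)\subset \mathsf{H}^{1,p}(M)\subset\mathsf{W}^{1,p}(M)$ is automatic, so once we have $\mathsf{W}^{1,p}$-density of $\mathsf{C}^{\infty}_0(M)$, the closedness of $\mathsf{H}^{1,p}(M)$ in $\mathsf{W}^{1,p}(M)$ forces $\mathsf{W}^{1,p}(M)=\mathsf{H}^{1,p}(M)$.

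The essential ingredient is the existence, under geodesic completeness, of a sequence of first-order cut-off functions $\chi_n\in\mathsf{C}^{\infty}_0(M)$ with $0\leq \chi_n\leq 1$, $\chi_n\nearrow 1$ pointwise, and $\left\|\Id\chi_n\right\|_{\infty}\to 0$. I would build these as follows: fix $x_0\in M$ and let $r(x)=d(x,x_0)$, which is $1$-Lipschitz; by Hopf--Rinow the balls $K_n(x_0)$ are relatively compact. Choose $\phi_n\in\mathsf{C}^{\infty}_{\mathrm{c}}([0,\infty))$ with $\phi_n=1$ on $[0,n]$, $\phi_n=0$ on $[2n,\infty)$, and $|\phi'_n|\leq 2/n$. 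The Lipschitz function $\phi_n\circ r$ then has $|\Id(\phi_n\circ r)|\leq 2/n$ a.e., and a standard mollification in a tubular neighborhood of its support (say via a locally finite partition of unity and convolution in the resulting charts, with mollification parameter chosen small enough) produces the desired smooth $\chi_n$.

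Given these cut-offs, for $f\in\mathsf{W}^{1,p}(M)$ the Leibniz rule gives
\[
\Id(\chi_n f)=\chi_n \Id f+f\Id\chi_n.
\]
Dominated convergence yields $\chi_n f\to f$ and $\chi_n\Id f\to \Id f$ in the respective $\mathsf{L}^p$-spaces, while
\[
\left\|f\Id\chi_n\right\|_p\leq \left\|\Id\chi_n\right\|_{\infty}\left\|f\right\|_p\longrightarrow 0,
\]
so $\chi_n f\to f$ in $\mathsf{W}^{1,p}(M)$. Thus it suffices to approximate any compactly supported $g\in\mathsf{W}^{1,p}(M)$ by elements of $\mathsf{C}^{\infty}_0(M)$: cover $\mathrm{supp}(g)$ by finitely many charts $(U_l,\varphi_l)$ with relatively compact $\varphi_l(U_l)\subset\IR^m$, choose a subordinate smooth partition of unity $(\psi_l)$, and apply the classical Meyers--Serrin theorem (or direct Friedrichs mollification in $\IR^m$) to each $\psi_l g$ pushed forward by $\varphi_l$; summing the resulting smooth compactly supported approximants yields the approximation in $\mathsf{W}^{1,p}(M)$, because the metric coefficients are smooth and bounded on the compact supports.

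The main technical obstacle is the construction of the smooth cut-offs $\chi_n$ with $\left\|\Id\chi_n\right\|_{\infty}\to 0$: the distance function itself is merely Lipschitz, and one must verify that the mollification of $\phi_n\circ r$ in local charts can be performed globally while retaining the bound $\left\|\Id\chi_n\right\|_{\infty}\leq C/n$, uniformly in the geometry near $\{n\leq r\leq 2n\}$. This is the only place where geodesic completeness is used, and it is precisely what fails in general: without completeness, one cannot ensure relative compactness of sublevel sets of $r$, and the construction breaks down.
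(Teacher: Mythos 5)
Your proposal follows essentially the same route as the paper: first-order cut-off functions (which the paper simply cites from Shubin, Proposition 4.1, rather than constructing) reduce the problem to compactly supported Sobolev functions, which are then smoothed in a finite atlas via the Euclidean Meyers--Serrin/Friedrichs theory. The only soft spot is your sketch of mollifying $\phi_n\circ r$ into a smooth $\chi_n$ with $\left\|\Id\chi_n\right\|_{\infty}\leq C/n$ --- chart-by-chart convolution does not automatically preserve a global gradient bound because of metric distortion in the charts --- but this is exactly the content of the result the paper cites, so the argument is sound.
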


\begin{proof} Under geodesic completeness, there is a sequence of functions $(\psi_n)\subset
\mathsf{C}^{\infty}_0(M)$ with 
$0\leq \psi_n\leq 1$, $\psi_n\to 1$ 
pointwise and $\left\|\Id\psi_n\right\|_{\infty}\to 0$ as $n \to \infty$ (see
\cite{shub}, Proposition 4.1). For $f\in \mathsf{W}^{1,p}(M)$ let $f_n:=\psi_nf$. Then the Sobolev product rule
$$
\Id f_n= f\Id \psi_n+ \psi_n \Id f
$$
implies that $f_n\in\mathsf{W}^{1,p}_0(M)$ (the compactly supported elements in $\mathsf{W}^{1,p}(M)$!) and also that $f_n\to f$ in $\left\|\bullet\right\|_{1,p}$, the latter from dominated convergence. Thus $\mathsf{W}^{1,p}_0(M)$ is dense in $\mathsf{W}^{1,p}(M)$ and it remains to show that functions from the former space can be approximated by functions in $\mathsf{C}^{\infty}_0(M)$. However, now one can use a partition of unity argument corresponding to a \emph{finite} atlas for $M$ to see that it is sufficient to prove that for an open subset $U$ of the Euclidean $\IR^m$, the space $\mathsf{C}^{\infty}_0(U)$ is dense in the normed space $\mathsf{W}^{1,p}_0(U)$. The latter fact is well-known.
\end{proof}

The following proposition completely clarifies the connection between Sobolev- and $\mathsf{BV}$-functions:

\begin{Proposition}\label{so} \emph{a)} One has $\left\|f\right\|_{\mathsf{BV}}= \left\|f\right\|_{1,1}$ 
for all $f\in\mathsf{W}^{1,1}(M)$. In particular, $\mathsf{H}^{1,1}(M)$ and
$\mathsf{W}^{1,1}(M)$ are closed subspaces of $\mathsf{BV}(M)$. \vspace{1mm}

\emph{b)} Any $f\in \mathsf{BV}(M)$ is in $\mathsf{W}^{1,1}(M)$, if and only if with the notation from \eqref{sigma_fdf} it holds that $|\mathrm{D} f|\ll
\mathrm{vol}$ as Borel measures.
\end{Proposition}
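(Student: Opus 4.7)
The plan is to deduce both statements from the uniqueness part of the structure theorem (Theorem \ref{Bvmeasures}), combined with the defining identity of the weak differential for $\mathsf{W}^{1,1}$-functions. The key insight is that once I identify a pair $(\mu,\sigma)\in\IMM(M)$ with $\Psi(\mu,\sigma)=\Id f$ as a generalized vector measure, the measure $\mu$ must coincide with $|\mathrm{D}f|$ by the injectivity of $\Psi$.

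For part (a), starting from $f\in\mathsf{W}^{1,1}(M)$, the defining identity
\[
\int_M \overline{f(x)}\Id^{\dagger}\alpha(x)\,\mathrm{vol}(\Id x)=\int_M (\Id f(x),\alpha(x))_x\,\mathrm{vol}(\Id x),\>\>\alpha\in\Omega^1_{\mathsf{C}^{\infty}_0}(M),
\]
of the weak differential gives $\mathrm{Var}(f)\leq \|\Id f\|_1<\infty$, so $f\in\mathsf{BV}(M)$. To promote this to equality, I would consider the candidate pair $(\mu,\sigma):=(|\Id f|\,\mathrm{vol},\,\Id f/|\Id f|)\in\IMM(M)$, with $\sigma$ extended arbitrarily as a Borel section on the ($\mu$-null) set $\{\Id f=0\}$. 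A direct computation shows $\Psi(\mu,\sigma)=\Id f$; hence by injectivity of $\Psi$, $(\mu,\sigma)=(|\mathrm{D}f|,\sigma_f)$ in $\IMM(M)$, so $\mathrm{Var}(f)=|\mathrm{D}f|(M)=\|\Id f\|_1$ and $\|f\|_{\mathsf{BV}}=\|f\|_{1,1}$. The \emph{in particular} clause is then immediate: the two norms coincide on $\mathsf{W}^{1,1}(M)$ and $\mathsf{H}^{1,1}(M)$, which are already complete under $\|\cdot\|_{1,1}$, so they are closed subspaces of $\mathsf{BV}(M)$.

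Part (b) is essentially a repackaging of the computation in (a). The forward implication is immediate since the identification $|\mathrm{D}f|=|\Id f|\,\mathrm{vol}$ established above is manifestly absolutely continuous with respect to $\mathrm{vol}$. For the converse, the Radon-Nikodym theorem produces a non-negative density $h\in\mathsf{L}^1(M,\mathrm{vol})$ with $|\mathrm{D}f|=h\,\mathrm{vol}$; choosing a Borel representative of $\sigma_f$ of norm $\leq 1$ everywhere, the form $\omega:=h\sigma_f$ lies in $\Omega^1_{\mathsf{L}^1}(M)$, and substituting into \eqref{sigma_fdf} exhibits $\omega$ as the weak differential of $f$, whence $f\in\mathsf{W}^{1,1}(M)$. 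I do not foresee a serious obstacle; the mildest technical point is just verifying that $(|\Id f|\,\mathrm{vol},\,\Id f/|\Id f|)$ is genuinely an element of $\IMM(M)$, which reduces to a trivial Borel extension argument on the $\mu$-null set $\{\Id f=0\}$.
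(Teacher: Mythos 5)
Your proposal is correct and follows essentially the same route as the paper: part (a) is obtained by exhibiting the pair $(|\Id f|\,\mathrm{vol},\,\Id f/|\Id f|)$ as the element of $\IMM(M)$ representing $\Id f$ and invoking Theorem \ref{Bvmeasures}, and part (b) is the same Radon--Nikodym repackaging the paper uses. Your write-up merely spells out the details (the Borel extension of $\sigma$ on $\{\Id f=0\}$, the completeness argument for closedness) that the paper leaves implicit.
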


\begin{proof} a) As one has
$$
\Id f(\alpha)=\int_M (\sigma(x),\alpha(x))_x \mu(\Id x)\>
\text{ for any $\alpha\in\Omega^1_{\mathsf{C}^{\infty}_0}(M)$,}
$$
where $\sigma:=\Id f/|\Id f|$ and $\mu:=|\Id f|\mathrm{vol}$, the claim follows immediately from Theorem \ref{Bvmeasures}.

b) In view of  \eqref{sigma_fdf}, if for some $0\leq \rho \in \mathsf{L}^1(M)$ one has $|\mathrm{D} f|=\rho\, \mathrm{vol}$, then $\Id f = \rho\, \sigma_f$ is integrable and $f$ is Sobolev. The other direction follows directly from the proof of part a).
\end{proof}

We close this section with three results on the variation of \emph{globally
integrable} functions 
that all additionally require \emph{geodesic completeness}. Firstly, in the latter
situation, the 
variation can be approximated simultaniously to the $\mathsf{L}^1$-norm by the
corresponding data 
of smooth compactly supported functions:

\begin{Theorem}\label{appros} If $M$ is geodesically complete, then 
for any $f\in \mathsf{L}^1(M)$ there is a sequence $(f_n)\subset
\mathsf{C}^{\infty}_0(M)$ 
such that $f_n\to f$ in $\mathsf{L}^1(M)$ and $\mathrm{Var}(f_n)\to \mathrm{Var}(f)$
as $n\to\infty$. 
\end{Theorem}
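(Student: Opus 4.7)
My plan rests on Proposition \ref{low}: any $\mathsf{L}^1$-convergent sequence $(f_n)\to f$ satisfies $\liminf_n\mathrm{Var}(f_n)\geq \mathrm{Var}(f)$ automatically, so I only need to build $(f_n)\subset\mathsf{C}^\infty_0(M)$ with $f_n\to f$ in $\mathsf{L}^1(M)$ and $\limsup_n\mathrm{Var}(f_n)\leq \mathrm{Var}(f)$. When $\mathrm{Var}(f)=\infty$, any $\mathsf{L}^1$-approximation of $f$ by $\mathsf{C}^\infty_0(M)$-functions (available by standard density in $\mathsf{L}^1$) already forces $\mathrm{Var}(f_n)\to\infty$, so I may henceforth assume $f\in\mathsf{BV}(M)$.

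\emph{Step 1: compactification of the support.} Pick the first-order cut-offs $(\psi_n)\subset\mathsf{C}^\infty_0(M)$ from \cite{shub}, Proposition 4.1, already invoked in the proof of Proposition \ref{densi}: $0\leq\psi_n\leq 1$, $\psi_n\to 1$ pointwise and $\|\Id\psi_n\|_\infty\to 0$. Set $g_n:=\psi_n f$; dominated convergence gives $g_n\to f$ in $\mathsf{L}^1(M)$. For any $\alpha\in\Omega^1_{\mathsf{C}^\infty_0}(M)$ with $\|\alpha\|_\infty\leq 1$, the product rule $\Id^\dagger(\psi_n\alpha)=\psi_n\Id^\dagger\alpha-(\Id\psi_n,\alpha)$ gives
$$
\int_M \overline{g_n}\,\Id^\dagger\alpha\,\mathrm{vol}(\Id x) = \int_M \overline{f}\,\Id^\dagger(\psi_n\alpha)\,\mathrm{vol}(\Id x) + \int_M \overline{f}\,(\Id\psi_n,\alpha)_x\,\mathrm{vol}(\Id x).
$$
Since $\|\psi_n\alpha\|_\infty\leq 1$, the first summand is bounded by $\mathrm{Var}(f)$ while the second is bounded by $\|f\|_1\|\Id\psi_n\|_\infty\to 0$. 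Hence $\limsup_n\mathrm{Var}(g_n)\leq \mathrm{Var}(f)$, and Proposition \ref{low} upgrades this to $\mathrm{Var}(g_n)\to\mathrm{Var}(f)$.

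\emph{Step 2: smoothing.} Having reduced to a compactly supported $g\in\mathsf{BV}(M)$, I fix a finite cover $\{U_l\}_{l=1}^d$ of $\mathrm{supp}(g)$ by coordinate charts with relatively compact images in $\IR^m$ and a subordinate partition of unity $\{\chi_l\}\subset\mathsf{C}^\infty_0(U_l)$. On each such chart the volume density and the fibre metric on $\Lambda^1\mathrm{T}^*M$ are smooth and bounded above and below by positive constants on the compact set $\mathrm{supp}(\chi_l)\cap\mathrm{supp}(g)$. Apply a standard Friedrichs mollifier $\rho_\varepsilon$ in the local coordinates to $\chi_l g$, pull back to $M$, and sum: this produces $g^{(\varepsilon)}\in\mathsf{C}^\infty_0(M)$ with $g^{(\varepsilon)}\to g$ in $\mathsf{L}^1(M)$, and the classical Anzellotti--Giaquinta scheme for mollification of BV functions, combined with commutator estimates for $[\rho_\varepsilon,\chi_l]$ in each chart, gives $\limsup_{\varepsilon\to 0}\mathrm{Var}(g^{(\varepsilon)})\leq \mathrm{Var}(g)$. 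A diagonal extraction across Steps 1 and 2 then produces the desired $(f_n)$.

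\emph{Main obstacle.} The delicate step is Step 2: since $M$ admits no global convolution, one has to work chart by chart and show that both the commutators $[\rho_\varepsilon,\chi_l]$ and the discrepancy between the Euclidean and Riemannian variations on each chart contribute only $o(1)$ to $\mathrm{Var}(g^{(\varepsilon)})$ as $\varepsilon\to 0$. The smoothness of the Riemannian data on each relatively compact chart is what makes these error terms negligible, but the bookkeeping of the diagonal extraction (choosing $\varepsilon=\varepsilon(l)$ small enough for each partition-of-unity piece, in the Meyers--Serrin style) is the main source of technical work. Step 1, by contrast, is essentially algebraic and relies only on the existence of first-order cut-offs supplied by geodesic completeness.
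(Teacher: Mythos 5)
Your proposal is correct and follows essentially the same route as the paper: handle $\mathrm{Var}(f)=\infty$ by lower semicontinuity, reduce to compact support using the first-order cut-offs supplied by geodesic completeness, and then localize to charts and mollify in the Meyers--Serrin/Anzellotti--Giaquinta style, which is exactly the Euclidean approximation machinery (Theorem 3.9 in \cite{ambro}) that the paper invokes after citing the localization argument of \cite[Proposition 1.4]{mira}. Your Step 1 is a clean explicit version of the cut-off part of that localization, and your Step 2 defers to the classical Euclidean results at the same level of detail as the paper does.
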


\begin{proof} If $\mathrm{Var}(f)=\infty$, then any sequence 
$(f_n)\subset \mathsf{C}^{\infty}_0(M)$ such that $f_n\to f$ in $\mathsf{L}^1(M)$
satisfies 
$\mathrm{Var}(f_n)\to \infty$ in view of Proposition \ref{low}.\\
For the case $\mathrm{Var}(f)<\infty$, let us remark that the statement is proved in 
\cite[Proposition 1.4]{mira} for $\mathsf{BV}_{\IR}(M)$, the real-valued elements of 
$\mathsf{BV}(M)$. However, one can use the same localization argument in our
complex-valued 
situation to reduce the assertion to domains in $\IR^m$ (the geodesic completeness
is used 
precisely in this highly nonstandard localization argument). In the latter case, the
assertion 
follows from combining our Corollary \ref{clas-m} above with suitable known
approximation results, 
which are available for $\IR^2$-valued $\mathsf{BV}$ functions in the Euclidean
setting (cf. 
Theorem 3.9 in \cite{ambro}) or for real-valued $\mathsf{BV}$ functions with respect
to 
\emph{weighted} variation (cf. Theorem 3.4 in \cite{baldi}). Indeed, any of the
latter two 
results can be easily generalized to cover the vector-valued weighted case.
\end{proof}
Note that Theorem \ref{appros} does not imply that $\mathsf{C}^{\infty}_0(M)$ is
dense in $\mathsf{BV}(M)$.\vspace{1.1mm}

We directly get the following corollary from combining the lower semi-continuity of
the variation 
with Proposition \ref{d14}, which seemingly cannot be deduced in an elementary way,
that is, without the above 
approximation result:

\begin{Corollary}\label{eqw} Let $g$ denote the underlying Riemannian structure on
$M$ and let $(M,g)$ be geodesically complete. If $g\rq{}$ is another 
Riemannian structure on $M$ which is quasi-isometric to $g$, that is, if there are
$C_1,C_2>0$ such that for all $x\in M$ one has $C_1g_x\leq g_x\rq{}\leq C_2g_x$ as
norms\footnote{Note here that quasi-isometric Riemannian structures produce
equivalent $\mathsf{L}^p$-norms.}, then the corresponding norms 
$\left\|\bullet\right\|_{\mathsf{BV}}$ and
$\left\|\bullet\right\|\rq{}_{\mathsf{BV}}$ are equivalent.
\end{Corollary}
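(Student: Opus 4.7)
The plan is to reduce the equivalence of the two BV norms to the elementary case of smooth compactly supported functions, where Proposition \ref{d14} gives an explicit formula for the variation in terms of $\|df\|_1$, and then to transfer the result to general $f\in\mathsf{BV}(M,g)$ by the approximation theorem and lower semicontinuity.

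The first step is purely algebraic/measure-theoretic. Quasi-isometry $C_1 g_x\leq g'_x\leq C_2 g_x$ transfers dually to the cotangent bundle as $C_2^{-1}g_x^*\leq (g')_x^*\leq C_1^{-1}g_x^*$, and by comparing determinants one gets $C_1^{m/2}\mathrm{vol}_g(\Id x)\leq \mathrm{vol}_{g'}(\Id x)\leq C_2^{m/2}\mathrm{vol}_g(\Id x)$. As a consequence, the norms $\|\bullet\|_p$ and $\|\bullet\|_p'$ on functions and on sections of $\bigwedge^k T^*M$ are equivalent for every $p\in[1,\infty]$, with constants depending only on $C_1,C_2,m$. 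Combining this with Proposition \ref{d14} immediately gives constants $K_1,K_2>0$ (depending only on $C_1,C_2,m$) such that $K_1\mathrm{Var}_g(\varphi)\leq \mathrm{Var}_{g'}(\varphi)\leq K_2\mathrm{Var}_g(\varphi)$ for every $\varphi\in\mathsf{C}^{\infty}_0(M)$.

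Next I promote this comparison to all $f\in \mathsf{BV}(M,g)$. Since distances induced by $g$ and $g'$ are equivalent, $(M,g')$ is also geodesically complete, hence Theorem \ref{appros} applies to both Riemannian structures. Given $f\in\mathsf{BV}(M,g)$, pick $(f_n)\subset\mathsf{C}^{\infty}_0(M)$ with $f_n\to f$ in $\mathsf{L}^1(M,g)$ and $\mathrm{Var}_g(f_n)\to \mathrm{Var}_g(f)$. By the equivalence of $\mathsf{L}^1$-norms we also have $f_n\to f$ in $\mathsf{L}^1(M,g')$, so the lower semicontinuity from Proposition \ref{low} applied to the $g'$-variation gives
\[
\mathrm{Var}_{g'}(f)\leq \liminf_{n\to\infty}\mathrm{Var}_{g'}(f_n)\leq K_2\liminf_{n\to\infty}\mathrm{Var}_g(f_n)=K_2\mathrm{Var}_g(f).
\]
By symmetry (the roles of $g$ and $g'$ are interchangeable, as $(M,g')$ is again complete and quasi-isometric to $(M,g)$), one obtains the reverse inequality $\mathrm{Var}_g(f)\leq K_1^{-1}\mathrm{Var}_{g'}(f)$. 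Together with the equivalence of the $\mathsf{L}^1$-norms this yields the equivalence of $\|\bullet\|_{\mathsf{BV}}$ and $\|\bullet\|'_{\mathsf{BV}}$.

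The only non-routine point is the second step: one really must smooth $f$ while retaining control on the variation, and there is no obvious intrinsic way to do this without the nontrivial Theorem \ref{appros}, since the chain rule is unavailable on $\mathsf{BV}$ in a form that would directly compare $\mathrm{Var}_g(f)$ with $\mathrm{Var}_{g'}(f)$. The approximation result is precisely what allows us to transplant the pointwise quasi-isometric comparison from $\mathsf{C}^{\infty}_0(M)$ to all of $\mathsf{BV}(M)$.
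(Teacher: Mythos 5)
Your argument is correct and follows exactly the route the paper indicates: compare $\mathrm{Var}_g$ and $\mathrm{Var}_{g'}$ on $\mathsf{C}^{\infty}_0(M)$ via Proposition \ref{d14} and the pointwise equivalence of the metrics, then transfer to all of $\mathsf{BV}(M)$ using Theorem \ref{appros} together with the lower semicontinuity of Proposition \ref{low}. You have in fact supplied more detail than the paper, which only sketches this combination in one sentence, and your observation that $(M,g')$ is again geodesically complete (needed to run the symmetric direction) is a point worth making explicit.
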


Finally, we note that geodesic completeness and global integrability admit an
enlargement of the admissible 
class of test-$1$-forms, 
a result that we will also use in the proof of our main result. To this end, let
\[
\Omega^1_{\mathrm{bd}}(M):=\left.\Big\{ \alpha  \right|
\alpha\in\Omega^1_{\mathsf{C}^{\infty}
\cap\mathsf{L}^{\infty}}(M), \Id^{\dagger}\alpha\in \mathsf{L}^{\infty}(M)\Big\}.
\]
Now the following fact can be easily deduced from the existence of 
first order cut-off functions:

\begin{Lemma}\label{d12} 
If $M$ is geodesically complete, then for any $f\in \mathsf{L}^1(M)$ one has 
\[
\mathrm{Var}(f)=\sup\left\{\left.\left|\int_M  \overline{ f(x)}\Id^{\dagger} 
\alpha(x)  \mathrm{vol}(\Id x)\right|\>\right| \alpha\in\Omega^1_{\mathrm{bd}}(M), 
\left\|\alpha\right\|_{\infty}\leq 1  \right\}.\nn
\]
\end{Lemma}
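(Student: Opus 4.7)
The inequality $\mathrm{Var}(f)\le \text{RHS}$ is immediate because $\Omega^1_{\mathsf{C}^{\infty}_0}(M)\subset \Omega^1_{\mathrm{bd}}(M)$. Thus the work is in the reverse inequality: given $\alpha\in\Omega^1_{\mathrm{bd}}(M)$ with $\|\alpha\|_\infty\le 1$, I want to show
\[
\left|\int_M  \overline{f(x)}\Id^{\dagger}\alpha(x)\,\mathrm{vol}(\Id x)\right|\le \mathrm{Var}(f).
\]

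The plan is the standard cut-off approximation, exploiting geodesic completeness exactly as in the proof of Proposition \ref{densi}. Invoking Shubin's Proposition 4.1, there is a sequence $(\psi_n)\subset \mathsf{C}^{\infty}_0(M)$ with $0\le \psi_n\le 1$, $\psi_n\to 1$ pointwise, and $\left\|\Id\psi_n\right\|_\infty \to 0$. Set $\alpha_n:=\psi_n\alpha\in\Omega^1_{\mathsf{C}^{\infty}_0}(M)$; clearly $\|\alpha_n\|_\infty\le 1$, so $\alpha_n$ is admissible in the definition of $\mathrm{Var}(f)$.

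The key computation is the Leibniz rule for the codifferential on the product of a scalar function and a $1$-form:
\[
\Id^{\dagger}(\psi_n \alpha)=\psi_n\, \Id^{\dagger}\alpha - (\Id\psi_n,\alpha),
\]
which follows from the Remark in Section \ref{gen} together with the identity $\mathrm{div}(\psi X)=\psi\,\mathrm{div}X+(\Id\psi,\alpha)_x$ for $X=X^{(\alpha)}$. Substituting, I obtain
\[
\int_M \overline{f}\,\Id^{\dagger}\alpha_n\, \mathrm{vol} =\int_M \overline{f}\,\psi_n\, \Id^{\dagger}\alpha\, \mathrm{vol} -\int_M \overline{f}\,(\Id\psi_n,\alpha)\, \mathrm{vol}.
\]

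Now I take $n\to\infty$. For the first term, $\overline{f}\,\Id^{\dagger}\alpha \in \mathsf{L}^1(M)$ since $f\in \mathsf{L}^1(M)$ and $\Id^{\dagger}\alpha\in \mathsf{L}^{\infty}(M)$ by the very definition of $\Omega^1_{\mathrm{bd}}(M)$; dominated convergence with dominating function $|f|\cdot\|\Id^{\dagger}\alpha\|_\infty$ gives the limit $\int_M \overline{f}\,\Id^{\dagger}\alpha\,\mathrm{vol}$. For the second term, the integrand is bounded pointwise by $|f|\cdot \|\Id\psi_n\|_\infty\cdot \|\alpha\|_\infty$, and since $\|\Id\psi_n\|_\infty\to 0$ and $f\in\mathsf{L}^1(M)$, the term vanishes in the limit. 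Combining with $\bigl|\int_M \overline{f}\,\Id^{\dagger}\alpha_n\,\mathrm{vol}\bigr|\le \mathrm{Var}(f)$ for each $n$ and passing to the limit yields the desired bound, and taking the supremum over $\alpha\in\Omega^1_{\mathrm{bd}}(M)$ with $\|\alpha\|_\infty\le 1$ completes the proof. No real obstacle arises beyond writing down the Leibniz identity correctly; completeness enters only through the existence of the sequence $(\psi_n)$.
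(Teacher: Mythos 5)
Your proposal is correct and follows essentially the same route as the paper's own proof: the trivial inclusion for one inequality, and for the other the first-order cut-off functions $(\psi_n)$ from Shubin's Proposition 4.1, the Leibniz rule $\Id^{\dagger}(\psi_n\alpha)=\psi_n\Id^{\dagger}\alpha-\alpha(X^{\Id\psi_n})$, and dominated convergence. The only difference is that you spell out the two-term limit argument in more detail than the paper does.
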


\begin{proof} The proof is essentially the same as the proof of Lemma 3.1 in
\cite{carbo}, which considers 
the real-valued situation. We repeat the simple argument for the convenience of the
reader.\\
 The inequality $\mathrm{Var}(f)\leq \dots$ is trivial. For $\mathrm{Var}(f)\geq
\dots$, we take a sequence 
of first order cut-off functions  $(\psi_n)$ as in the proof of Proposition \ref{densi} and let $
\alpha\in\Omega^1_{\mathrm{bd}}(M)$ be such 
that $\left\|\alpha\right\|_{\infty}\leq 1$. Then one has 
\[
\Id^{\dagger}  (\psi_n\alpha)= \psi_n\Id^{\dagger}  \alpha-\alpha (X^{\Id \psi_n}),
\]
 where $X^{\Id \psi_n}$ is the smooth vector field on $M$ corresponding to $\Id
\psi_n$ via the Riemannian 
structure, and one gets
\[
\left|\int_M  \overline{ f(x)}\Id^{\dagger} \alpha(x)  \mathrm{vol}(\Id x)\right|
=\lim_{n\to \infty}\left|\int_M  \overline{ f(x)}\Id^{\dagger}  (\psi_n\alpha)(x) 
\mathrm{vol}(\Id x)\right|
\leq \mathrm{Var}(f),
\]   
where the equality follows from dominated convergence.
\end{proof}

\section{The heat semigroup characterization of the variation}\label{haup}

We now come to the formulation and the proof of the main result of this note: A heat
semigroup 
characterization of the variation of $\mathsf{L}^1$- functions to a class of
Riemannian manifolds 
with possibly unbounded from below Ricci curvature. To be precise, we will allow
certain negative 
parts of the Ricci cuvature to be in the Kato class of $M$. To this end we first
recall the 
definition of the minimal positive heat kernel $p(t,x,y)$ on $M$: Namely, $p(t,x,y)$
can be defined 
\cite{grig} as the pointwise minimal function 
\[
p(\bullet,\bullet,\bullet):(0,\infty)\times M\times M\longrightarrow  (0,\infty) 
\]
with the property that for all fixed $y\in M$, the function $p(\bullet,\bullet,y)$
is a classic 
(= $\mathsf{C}^{1,2}$) solution of 
\[
\partial_t u(t,x)=\f{1}{2}\Delta u(t,x),\>\>\lim_{t\to 0+} u(t,\bullet)=\delta_y. 
\]
It follows from parabolic regularity that $p(t,x,y)$ is smooth in $(t,x,y)$, and
furthermore 
$p(t,\bullet,\bullet)$ is the unique continuous version of the integral kernel of
$\mathrm{e}^{-t H}$. 
The reader should notice that the strict positivity of 
$p(t,x,y)$ follows from the connectedness of $M$. Now we can define:

\begin{Definition} A Borel 
function $w:M\to \IC$ is said to be in the {\it Kato class} $\mathcal{K}(M)$ of $M$,
if 
\begin{align}
\lim_{t\to 0+}\sup_{x\in M} \int^t_0\int_M p(s,x,y) |w(y)|\mathrm{vol}(\Id y)\Id s =0. 
\end{align}
\end{Definition}

It is easily seen \cite{G1} that one always has the inclusions
\[
\mathsf{L}^{\infty}(M)\subset \mathcal{K}(M) \subset \mathsf{L}^{1}_{\mathrm{loc}}(M),
\]
but in typical applications one can say much more. To make the latter statement
precise, for any 
$p\in [1,\infty)$ let $\mathsf{L}^p_{\mathrm{u,loc}}(M)$ denote the space of uniformly
locally $p$-integrable 
functions on $M$, that is, a Borel function $w:M\to\IC$ is in
$\mathsf{L}^p_{\mathrm{u,loc}}(M)$, if and 
only if    
\begin{align}
\sup_{x\in
M} \int_{\mathrm{K}_1(x)}\left|w(y)\right|^p\mathrm{vol}(\Id
y)<\infty. 
\end{align}
Note the simple inclusions
\[
\mathsf{L}^p(M)\subset \mathsf{L}^p_{\mathrm{u,loc}}(M)\subset
\mathsf{L}^p_{\mathrm{loc}}(M).
\]
Now one has the following result, which essentially states that a Gaussian upper
bound for $p(t,x,y)$ 
implies $\mathsf{L}^{p}(M)\subset \mathcal{K}(M)$ for suitable $p=p(m)$, and that
with a little more 
control on the Riemannian structure one even has
$\mathsf{L}^{p}_{\mathrm{u,loc}}(M)\subset \mathcal{K}(M)$ 
(cf. Proposition 2.4 in \cite{post}):

\begin{Proposition}\label{katt} Let $p$ be such that $p\geq 1$ if $m=1$, and $p>m/2$
if $m\geq 2$.\\
\emph{a)} If there is $C>0$ and a $t_0>0$ such that for all $0<t \leq t_0$ and all
$x\in M$ one has 
$ p(t,x,x)\leq Ct^{ -\f{m}{2} }$, then one has 
\begin{align}
\mathsf{L}^{p}(M)+\mathsf{L}^{\infty}(M)\subset \mathcal{K}(M).\label{incl}
\end{align}
\emph{b)} Let $M$ be geodesically complete, and assume that there are constants
$C_1,\dots ,C_6, t_0 >0$ 
such that for all $0<t \leq t_0$, $x,y\in M$, $r>0$ one has 
$\mathrm{vol}(\mathrm{K}_r(x))\leq C_1 r^m \mathrm{e}^{C_2 r}$ and
\begin{align}
C_3t^{ -\f{m}{2} } \mathrm{e}^{-C_4 \f{ \Id(x,y)^2}{t}}\leq p(t,x,y)
\leq C_5t^{ -\f{m}{2} } \mathrm{e}^{-C_6 \f{ \Id(x,y)^2}{t}}.\nn
\end{align}
Then one has
\begin{align}
\mathsf{L}^{p}_{\mathrm{u,loc}}(M)+\mathsf{L}^{\infty}(M)\subset
\mathcal{K}(M).\label{incl2}
\end{align}
\end{Proposition}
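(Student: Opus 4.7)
The plan is to reduce matters to Hölder's inequality in the space variable, so that everything comes down to controlling $\|p(s,x,\cdot)\|_{p'}$, where $p'$ is the Hölder conjugate of $p$. Since the inclusion $\mathsf{L}^\infty(M)\subset\mathcal{K}(M)$ has already been recorded, it suffices to prove $\mathsf{L}^p(M)\subset\mathcal{K}(M)$ in (a) and $\mathsf{L}^p_{\mathrm{u,loc}}(M)\subset\mathcal{K}(M)$ in (b).

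For part (a), I would first promote the on-diagonal bound to an off-diagonal one via the semigroup identity $p(2s,x,x)=\int_M p(s,x,y)^2\,\mathrm{vol}(\Id y)$ together with Cauchy-Schwarz, giving $p(s,x,y)\leq p(s,x,x)^{1/2}p(s,y,y)^{1/2}\leq Cs^{-m/2}$ for sufficiently small $s>0$. For $w\in\mathsf{L}^p(M)$, Hölder's inequality combined with the sub-Markov property $\int_M p(s,x,y)\,\mathrm{vol}(\Id y)\leq 1$ then yields
\[
\int_M p(s,x,y)|w(y)|\,\mathrm{vol}(\Id y)\leq (Cs^{-m/2})^{(p'-1)/p'}\|w\|_p = C'\,s^{-m/(2p)}\|w\|_p,
\]
uniformly in $x$, using $(p'-1)/p'=1/p$. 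Since $p>m/2$ (or $p\geq 1$ when $m=1$), one has $m/(2p)<1$, and integrating on $(0,t)$ produces a bound proportional to $t^{1-m/(2p)}$, which vanishes as $t\to 0^+$.

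For part (b), the only-local integrability of $w$ forces a more refined argument. I would decompose $M=\mathrm{K}_1(x)\cup\bigcup_{k\geq 1}A_k(x)$, where $A_k(x):=\mathrm{K}_{k+1}(x)\setminus\mathrm{K}_k(x)$, and apply Hölder on each piece. On $\mathrm{K}_1(x)$ the reasoning from (a) applies verbatim with $\|w\|_{\mathsf{L}^p(\mathrm{K}_1(x))}\leq\|w\|_{\mathsf{L}^p_{\mathrm{u,loc}}}$. On each annulus $A_k(x)$ with $k\geq 1$, the Gaussian upper bound gives the pointwise estimate $p(s,x,y)^{p'-1}\leq (C_5s^{-m/2})^{p'-1}\mathrm{e}^{-C_6(p'-1)k^2/s}$, whence
\[
\bigg(\int_{A_k(x)}p(s,x,y)^{p'}\,\mathrm{vol}(\Id y)\bigg)^{1/p'}\leq C''s^{-m/(2p)}\mathrm{e}^{-C_6k^2/(ps)}.
\]
Using the volume growth hypothesis to cover $\mathrm{K}_{k+1}(x)$ by $N_k=O((k+1)^m\mathrm{e}^{C_2(k+1)})$ unit balls, I would get $\|w\|_{\mathsf{L}^p(A_k(x))}\leq N_k^{1/p}\|w\|_{\mathsf{L}^p_{\mathrm{u,loc}}}$. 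Summing on $k$ and integrating on $s$ again produces a bound of the form $\mathrm{const}\cdot\|w\|_{\mathsf{L}^p_{\mathrm{u,loc}}}\cdot t^{1-m/(2p)}\to 0$.

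The delicate step will be verifying, in (b), that the prefactor $\sum_{k\geq 0}\mathrm{e}^{-C_6k^2/(ps)}N_k^{1/p}$ stays uniformly bounded in $s\in(0,t_0]$ and $x\in M$. This follows because, even at $s=t_0$, the Gaussian decay $\mathrm{e}^{-C_6k^2/(pt_0)}$ comfortably dominates the quasi-exponential volume growth $(k+1)^{m/p}\mathrm{e}^{C_2(k+1)/p}$, and shrinking $s$ only improves the exponent; the volume growth is needed precisely to keep $N_k$ under control independently of the base point $x$. Everything else reduces to Hölder's inequality and the semigroup/sub-Markov properties of the heat kernel.
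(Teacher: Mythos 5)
The paper itself offers no proof of this proposition: it is stated with a pointer to Proposition 2.4 of \cite{post}, so there is no internal argument to compare yours against line by line. Your route is nevertheless the natural one, and part (a) is correct and complete: the on-diagonal bound upgrades to $p(s,x,y)\leq Cs^{-m/2}$ via $p(s,x,y)\leq p(s,x,x)^{1/2}p(s,y,y)^{1/2}$, and H\"older together with $\int_M p(s,x,y)\,\mathrm{vol}(\mathrm{d}y)\leq 1$ gives $\|p(s,x,\cdot)\|_{p'}\leq C^{1/p}s^{-m/(2p)}$, which is integrable near $s=0$ precisely because $p>m/2$ (resp.\ $p\geq 1$ for $m=1$). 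The reduction to $\mathsf{L}^p\subset\mathcal{K}(M)$ is legitimate since $\mathcal{K}(M)$ is a vector space containing $\mathsf{L}^\infty(M)$.

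In part (b) there is one genuine gap: the step ``use the volume growth hypothesis to cover $\mathrm{K}_{k+1}(x)$ by $N_k=O((k+1)^m\mathrm{e}^{C_2(k+1)})$ unit balls.'' An \emph{upper} bound on $\mathrm{vol}(\mathrm{K}_r(x))$ does not control covering numbers: a region of tiny volume (think of a long collapsing neck) may still require arbitrarily many unit balls. To turn the volume bound into a covering bound you need a uniform non-collapsing estimate $\inf_{z\in M}\mathrm{vol}(\mathrm{K}_{1/2}(z))>0$, so that a maximal $1$-separated net in $\mathrm{K}_{k+1}(x)$ has cardinality at most $\mathrm{vol}(\mathrm{K}_{k+2}(x))/\inf_z\mathrm{vol}(\mathrm{K}_{1/2}(z))$. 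This lower volume bound is not among the hypotheses and must be extracted from them: e.g.\ geodesic completeness plus $\mathrm{vol}(\mathrm{K}_r(x))\leq C_1r^m\mathrm{e}^{C_2r}$ gives stochastic completeness, the Gaussian upper bound then shows $\int_{\mathrm{K}_{1/2}(z)}p(t/2,z,y)\,\mathrm{vol}(\mathrm{d}y)\geq 1/2$ for some fixed small $t$ uniformly in $z$, and Cauchy--Schwarz in the form $\bigl(\int_{\mathrm{K}_{1/2}(z)}p(t/2,z,y)\,\mathrm{vol}(\mathrm{d}y)\bigr)^2\leq\mathrm{vol}(\mathrm{K}_{1/2}(z))\,p(t,z,z)\leq\mathrm{vol}(\mathrm{K}_{1/2}(z))\,C_5t^{-m/2}$ yields the desired positive lower bound. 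Note that as written your argument never uses the heat kernel \emph{lower} bound nor geodesic completeness, which is a signal that something in the hypotheses is being left on the table. Once the covering bound is secured, the rest of your part (b) --- H\"older on each annulus, the factor $\mathrm{e}^{-C_6k^2/(ps)}$ beating $N_k^{1/p}$ uniformly for $s\in(0,t_0]$, and the final integration in $s$ --- is sound.
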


We refer the reader to \cite{G1} and particularly to \cite{kt} for several further
global and local 
aspects on $\mathcal{K}(M)$.\\
In the sequel, we will consider the Ricci curvature $\IRR$ of $M$ as a smooth,
self-adjoint section in 
the smooth complex vector bundle $\mathrm{End}\left( \mathrm{T}^* M\right)\to M$,
whose quadratic form 
is defined pointwise through the trace of the Riemannian curvature tensor of $M$. \\
With these notions at hand, the following dynamical characterization of the
variation of globally integrable 
functions is the main result of this note:

\begin{Theorem}\label{BV} Let $M$ be geodesically complete and assume that $\IRR$
admits a decomposition 
$\IRR=\IRR_1-\IRR_2$ into pointwise self-adjoint Borel sections $\IRR_1,\IRR_2\geq
0$ in 
$\mathrm{End}( \mathrm{T}^* M)$ such that $|\IRR_2|\in\mathcal{K}(M)$. Then for any 
$f\in \mathsf{L}^1(M)$ one has
\begin{align}
\mathrm{Var}(f)=\lim_{t\to 0+} \int_M\left|\Id \mathrm{e}^{-tH}f(x)\right|_x
\mathrm{vol}(\Id x).\label{hap}
\end{align}
\end{Theorem}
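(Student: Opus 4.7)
The plan is to prove the two inequalities in \eqref{hap} separately, with the easier lower bound coming from abstract lower semicontinuity and the harder upper bound coming from a dual estimate that exploits the Weitzenb\"ock formula together with a Kato-type bound for the heat semigroup on $1$-forms.

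For the ``$\leq$'' direction, note that for every $t>0$ the function $\mathrm{e}^{-tH}f$ has a smooth representative; hence Proposition \ref{d14} gives
\[
\int_M \left|\Id \mathrm{e}^{-tH}f(x)\right|_x\mathrm{vol}(\Id x) = \mathrm{Var}(\mathrm{e}^{-tH}f).
\]
Since $\mathrm{e}^{-tH}f\to f$ in $\mathsf{L}^1(M)$ as $t\to 0+$ by strong continuity of the semigroup on $\mathsf{L}^1(M)$, Proposition \ref{low} applied to the $\mathsf{L}^1$-topology yields $\mathrm{Var}(f)\leq\liminf_{t\to 0+}\mathrm{Var}(\mathrm{e}^{-tH}f)$, which is the desired inequality.

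For the reverse ``$\geq$'' direction, the core idea is to dualize. For $\alpha\in\Omega^1_{\mathsf{C}^{\infty}_0}(M)$ with $\|\alpha\|_\infty\leq 1$, integration by parts and $\mathsf{L}^2$-self-adjointness of $\mathrm{e}^{-tH}$ yield
\[
\int_M (\Id\mathrm{e}^{-tH}f,\alpha)_x\mathrm{vol}(\Id x)
= \int_M \overline{f(x)}\,\mathrm{e}^{-tH}\Id^\dagger\alpha(x)\mathrm{vol}(\Id x).
\]
Now invoke Weitzenb\"ock: $2H_1=\nabla^{\dagger}\nabla+\IRR$, so the $1$-form semigroup $(\mathrm{e}^{-tH_1})_{t\geq 0}$ is a generalized Schr\"odinger semigroup with potential $\IRR$. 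The commutation $\mathrm{e}^{-tH}\Id^{\dagger}=\Id^{\dagger}\mathrm{e}^{-tH_1}$ on compactly supported smooth forms (equivalently $\Id\mathrm{e}^{-tH}=\mathrm{e}^{-tH_1}\Id$) then gives
\[
\int_M (\Id\mathrm{e}^{-tH}f,\alpha)_x\mathrm{vol}(\Id x)
= \int_M \overline{f(x)}\,\Id^\dagger\beta_t(x)\mathrm{vol}(\Id x), \qquad \beta_t:=\mathrm{e}^{-tH_1}\alpha.
\]
The probabilistic formulae from \cite{G2} represent $\beta_t$ via a Feynman--Kac integral over Brownian motion with a multiplicative functional whose norm is dominated by $\exp\bigl(\tfrac{1}{2}\int_0^t|\IRR_2|(X_s)\Id s\bigr)$, the positive part $\IRR_1\geq 0$ only helping. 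Combined with the forthcoming Lemma \ref{xdd} (a Khasminskii-type bound for Kato functions) one obtains a function $c(t)\to 1$ as $t\to 0+$ with $\|\beta_t\|_{\infty}\leq c(t)\|\alpha\|_{\infty}$, while $\|\Id^\dagger\beta_t\|_{\infty}=\|\mathrm{e}^{-tH}\Id^\dagger\alpha\|_{\infty}\leq\|\Id^\dagger\alpha\|_\infty<\infty$. Thus $\beta_t\in\Omega^1_{\mathrm{bd}}(M)$ with $\|\beta_t/c(t)\|_\infty\leq 1$, and Lemma \ref{d12} (which is available since $M$ is geodesically complete and $f\in\mathsf{L}^1(M)$) gives
\[
\left|\int_M(\Id\mathrm{e}^{-tH}f,\alpha)_x\mathrm{vol}(\Id x)\right|\leq c(t)\,\mathrm{Var}(f).
\]
Taking the supremum over $\alpha$ via Proposition \ref{d14} produces $\|\Id\mathrm{e}^{-tH}f\|_1\leq c(t)\,\mathrm{Var}(f)$, and letting $t\to 0+$ yields $\limsup_{t\to 0+}\|\Id\mathrm{e}^{-tH}f\|_1\leq\mathrm{Var}(f)$.

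The main obstacle is the probabilistic step: one must carefully justify that the intertwining $\Id\mathrm{e}^{-tH}=\mathrm{e}^{-tH_1}\Id$ and the pointwise domination $|\mathrm{e}^{-tH_1}\alpha|\leq c(t)\|\alpha\|_\infty$ remain valid when $\IRR$ is only decomposed as $\IRR_1-\IRR_2$ with $|\IRR_2|\in\mathcal{K}(M)$, rather than uniformly bounded below. This is precisely where the machinery of generalized Schr\"odinger semigroups from \cite{G2} enters, together with the Kato-class bound from Lemma \ref{xdd} that upgrades Khasminskii's estimate so that the constant $c(t)$ genuinely tends to $1$ as $t\to 0+$.
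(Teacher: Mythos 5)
Your proposal is correct and follows essentially the same route as the paper: the lower bound via $\mathsf{L}^1$-convergence and lower semicontinuity (the paper writes out the same duality computation directly), and the upper bound via the intertwining $\Id^{\dagger}\mathrm{e}^{-tH_1}=\mathrm{e}^{-tH}\Id^{\dagger}$, the Kato/Khasminskii bound of Lemma \ref{xdd} feeding into the $\mathsf{L}^{\infty}$-estimate for $\mathrm{e}^{-tH_1}$ (Lemma \ref{smoo}), and the enlarged test class of Lemma \ref{d12}. The only cosmetic difference is that you package the semigroup bound as $c(t)\to 1$ while the paper uses $\delta\,\mathrm{e}^{tC(\delta)}$ and takes $\limsup_{t\to 0+}$ before $\delta\to 1+$; both yield the same conclusion.
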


\begin{Remark}\label{bem} 
If $M$ is the Euclidean $\IR^m$, then (\ref{hap}) has been proven 
(for real-valued $f\rq{}s$) by De Giorgi \cite{giorgi} in 1954. De Giorgi\rq{}s
result has 
been extended to geodesically complete Riemannian manifolds first by 
Miranda/ the second author/Paronetto/Preunkert \cite{mira} in 2007, under the
assumptions that $M$ has 
Ricci curvature $\IRR$ bounded below and satisfies the nontrapping condition
\begin{align}
\inf_{x\in M} \mathrm{vol} (\mathrm{K}_1(x))>0.\label{trap} 
\end{align}
Again in 2007, Carbonaro-Mauceri \cite{carbo} have removed condition \eqref{trap},
giving a  
much simpler proof that relies on an $\mathsf{L}^\infty$-estimate for
$\mathrm{e}^{-tH_1}$ 
due to Bakry \cite{Bakry}. \\
The point we want to make here is that a large part of the technique from
\cite{carbo} is flexible enough 
to deal with certain unbounded \lq\lq{}negative parts\rq\rq{} of $\IRR$. The
essential observation is that 
in view of Weitzenb\"ock\rq{}s formula for $-\Delta_1$, $\mathrm{e}^{-t H_1}$
becomes a generalized 
Schr\"odinger semigroup with potential $\IRR/2$, which, under our assumptions on
$M$, is given by a 
Feynman-Kac type path integral formula. This follows from the abstract work of the
first author on generalized 
Schr\"odinger semigroups \cite{G2}. Through semigroup domination, the latter formula
makes it possible 
to prove (see Lemma \ref{smoo} below) a bound of the form
\[
\left\|\mathrm{e}^{-t H_1}\mid_{\Omega^1_{\mathsf{L}^{2}\cap
\mathsf{L}^{\infty}}(M)}\right\|_{\infty}
\leq \delta \mathrm{e}^{tC(\delta)}\>\text{ for all $t\geq 0$, $\delta>1$,}
\]
which is weaker for small times than the above mentioned
$\mathsf{L}^\infty$-estimate by Bakry for the case $\IRR\geq -C$ (the latter is the
form $\dots\leq \mathrm{e}^{tC}$), but 
turns out to be just enough to extend a large part of the methods of \cite{carbo} to
our more general setting. 
Finally, let us also point out that heat semigroup characterizations of
$\mathsf{BV}$ have also been 
derived in other situations than functions on Riemannian manifolds (cf. \cite{FH}
\cite{BVWiener1} 
\cite{Carnot}). We propose two further extensions of the setting of Theorem \ref{BV}:
\begin{itemize} 
\item Definition \ref{vari} suggests that one can define the notion of  a
\lq\lq{}$D$-variation\rq\rq{} 
for sections in vector bundles, instead of functions, where \lq\lq{}$\Id$\rq\rq{}
has to be replaced by 
an appropriate first order linear differential operator $D$ acting between sections.
Here, in principle, 
the results from \cite{thal} on path integral formulae for the derivatives of
geometric Schr\"odinger 
semigroups could be very useful. 

\item As all of the data in (\ref{hap}) have analogues (see for example
\cite{dod,ognjen}) on discrete metric graphs, it 
would certainly be also interesting to see to what extent such a result can be
proved in the infinite 
discrete setting. 
\end{itemize}
\end{Remark}

Before we come to the proof of Theorem \ref{BV}, we continue with several
consequences of the latter result. Firstly, in view of Proposition \ref{katt}, we
directly get the following criterion:

\begin{Corollary}\label{cor1} \emph{a)} Under the assumptions of Proposition
\ref{katt} a), assume that $\IRR$ admits a decomposition 
$\IRR=\IRR_1-\IRR_2$ into pointwise self-adjoint Borel sections $\IRR_1,\IRR_2\geq
0$ in 
$\mathrm{End}( \mathrm{T}^* M)$ such that
$|\IRR_2|\in\mathsf{L}^p(M)+\mathsf{L}^{\infty}(M)$. Then one has (\ref{hap}).\\
\emph{b)} Under the assumptions of Proposition \ref{katt} b), assume that $\IRR$
admits a decomposition 
$\IRR=\IRR_1-\IRR_2$ into pointwise self-adjoint Borel sections $\IRR_1,\IRR_2\geq
0$ in 
$\mathrm{End}( \mathrm{T}^* M)$ such that
$|\IRR_2|\in\mathsf{L}^{p}_{\mathrm{u,loc}}(M)+\mathsf{L}^{\infty}(M)$. Then one has
(\ref{hap}).
\end{Corollary}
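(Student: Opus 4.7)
The plan is to reduce both parts of Corollary \ref{cor1} directly to Theorem \ref{BV}. The decomposition $\IRR = \IRR_1 - \IRR_2$ is already in the exact form required by that theorem, so the only thing that needs checking is that under each of the two sets of $\mathsf{L}^p$-type hypotheses on $|\IRR_2|$ one has $|\IRR_2|\in\mathcal{K}(M)$; once this is established, (\ref{hap}) follows immediately.

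For part (a), I would first invoke Proposition \ref{katt}(a): the on-diagonal heat kernel bound $p(t,x,x)\leq Ct^{-m/2}$ is precisely its hypothesis, and it supplies the inclusion $\mathsf{L}^p(M)+\mathsf{L}^\infty(M)\subset \mathcal{K}(M)$ for the stated range of $p$. Writing $|\IRR_2|\leq |w_1|+|w_2|$ with $w_1\in\mathsf{L}^p(M)$ and $w_2\in\mathsf{L}^\infty(M)$, the defining Kato integrand
\[
\sup_{x\in M}\int_0^t\int_M p(s,x,y)|\IRR_2(y)|\,\mathrm{vol}(\Id y)\,\Id s
\]
is dominated by the sum of the two corresponding integrands for $w_1$ and $w_2$, both of which tend to $0$ as $t\to 0+$. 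Hence $|\IRR_2|\in\mathcal{K}(M)$, and Theorem \ref{BV} yields (\ref{hap}).

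For part (b) the argument is entirely parallel: the Gaussian two-sided bound on $p(t,x,y)$ together with the polynomial-with-exponential volume growth condition, which are exactly the hypotheses of Proposition \ref{katt}(b), deliver the stronger inclusion $\mathsf{L}^p_{\mathrm{u,loc}}(M)+\mathsf{L}^\infty(M)\subset \mathcal{K}(M)$. The same pointwise domination argument then gives $|\IRR_2|\in\mathcal{K}(M)$, and Theorem \ref{BV}, whose geodesic completeness hypothesis is built into the assumptions of Proposition \ref{katt}(b), finishes the proof.

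There is no real obstacle here, since everything reduces to citing the two previously established results. The only piece of bookkeeping is the elementary observation that $\mathcal{K}(M)$ is stable under pointwise domination and under finite sums, both of which follow directly from monotonicity and subadditivity of the defining limit in the definition of the Kato class.
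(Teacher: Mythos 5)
Your proposal is correct and matches the paper exactly: the paper treats Corollary \ref{cor1} as an immediate consequence of Proposition \ref{katt} (which supplies the inclusions $\mathsf{L}^p(M)+\mathsf{L}^\infty(M)\subset\mathcal{K}(M)$ and $\mathsf{L}^p_{\mathrm{u,loc}}(M)+\mathsf{L}^\infty(M)\subset\mathcal{K}(M)$) combined with Theorem \ref{BV}, which is precisely your reduction. Your added remarks on monotonicity and subadditivity of the Kato condition are harmless bookkeeping that the paper leaves implicit.
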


We remark that a somewhat comparable assumption on the Ricci curvature as in part a)
of the above corollary 
has also been made in Theorem 2.1 in \cite{coul}, where the authors prove certain
large time bounds on the 
integral kernel of $\mathrm{e}^{-t H_1}$.\vspace{1.2mm}

Theorem \ref{BV} makes it also possible to derive a stability of (\ref{hap}) under
certain conformal 
transformations which is particularly useful in the Euclidean $\IR^m$. 

\begin{Remark}1. If $g$ denotes the fixed Riemannian structure on $M$ and if 
$\psi\in\mathsf{C}^{\infty}_{\IR}(M)$, then we can define a new Riemannian structure
on $M$ by setting 
$g_{\psi}:=\mathrm{e}^{2\psi} g$. Cearly, a section in $\mathrm{End}(\mathrm{T}^*M)$
is self-adjoint with 
respect to $g$, if and only if it is self-adjoint with respect to $g_{\psi}$.

2. The Riemannian structures $g$ and $g_{\psi}$ are quasi-isometric if $\psi$ is
bounded, so that then 
$\mathsf{L}^p(M;g_{\psi})=\mathsf{L}^p(M)$, as well as 
$\mathsf{L}^p_{\mathrm{u,loc}}(M;g_{\psi})=\mathsf{L}^p_{\mathrm{u,loc}}(M)$ for all
$p$. Clearly, the 
boundedness of $\psi$ also implies that a self-adjoint section in
$\mathrm{End}(\mathrm{T}^*M)$ is bounded 
from below with respect to $g$, if and only if it is so with respect to $g_{\psi}$.

3. If we denote with $\IRR_{\psi}$ the Ricci curvature with respect to $g_{\psi}$,
then one has the 
perturbation formula (see for example Theorem 1.159 in \cite{besse})
$\IRR_{\psi}=\IRR +\ITT_{\psi}$, 
where $\ITT_{\psi}$ is the smooth self-adjoint section in $\mathrm{End}(\mathrm{T}^*
M)$ given by
\begin{align}
\ITT_{\psi}:=(2-m)\Big(\mathrm{Hess}(\psi)-\Id \psi\otimes \Id \psi\Big)-\Big(\Delta
\psi+
(m-2)|\Id \psi|^2\Big) g.\label{decom}
\end{align}
It is clear from (\ref{decom}) that $\IRR_{\psi}$ need not be bounded from below,
even if $\IRR$ is bounded 
from below and $\psi$ is bounded.
\end{Remark}

Now we can prove the following result which uses the machinery of parabolic Harnack
inequalities:

\begin{Corollary}\label{cor2} Let $\psi\in\mathsf{C}^{\infty}_{\IR}(M)$ be bounded,
let $M$ be geodesically 
complete and let $p$ be such that $p\geq 1$ if $m=1$, and $p>m/2$ if $m\geq 2$.
Furthermore, assume that 
there are $C_1,C_2,R>0$ with the following property: one has $\IRR\geq -C_1$ and 
\begin{align}
\mathrm{vol}(\mathrm{K}_r(x))\geq C_2 r^{m}\>\>\text{ for all $0<r\leq R$, $x\in
M$.}\label{m0}
\end{align}
If $\ITT_{\psi}$ admits a decomposition 
$\ITT_{\psi}=\ITT_{1}-\ITT_{2}$ into pointwise self-adjoint Borel sections
$\ITT_{1},\ITT_{2}\geq 0$ in 
$\mathrm{End}( \mathrm{T}^* M)$ such that 

$$|\ITT_{2}|\in\mathsf{L}^{p}_{\mathrm{u,loc}}(M;g_{\psi})+\mathsf{L}^{\infty}(M;g_{\psi}),$$
then for any 
$f\in \mathsf{L}^1(M;g_{\psi})$ one has (\ref{hap}) with respect to $g_{\psi}$.
\end{Corollary}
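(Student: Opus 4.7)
The plan is to verify the hypotheses of Corollary \ref{cor1} b) for the Riemannian manifold $(M, g_\psi)$; once this is done, the conclusion \eqref{hap} with respect to $g_\psi$ follows at once. Since $\psi$ is bounded, $g$ and $g_\psi$ are quasi-isometric. In particular $(M, g_\psi)$ remains geodesically complete, the function spaces $\mathsf{L}^p_{\mathrm{u,loc}}(M;g_\psi)$ and $\mathsf{L}^\infty(M;g_\psi)$ coincide (up to equivalent norms) with their $g$-analogues, and the pointwise operator norm on $\mathrm{End}(\mathrm{T}^*M)$ is invariant under conformal rescaling of the fibre metric, so for any endomorphism section one has $|\bullet|_{g_\psi} = |\bullet|_g$.

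For the decomposition of $\IRR_\psi$, I would combine the perturbation identity $\IRR_\psi = \IRR + \ITT_\psi = \IRR + \ITT_1 - \ITT_2$ with the lower bound $\IRR \geq -C_1$ to write
\begin{equation*}
\IRR_\psi = \bigl(\IRR + C_1\, g + \ITT_1\bigr) - \bigl(C_1\, g + \ITT_2\bigr),
\end{equation*}
where both bracketed sections are pointwise self-adjoint and $\geq 0$. The norm of the negative part is bounded pointwise by $C_1 + |\ITT_2|_{g_\psi}$, and so lies in $\mathsf{L}^\infty(M;g_\psi) + \mathsf{L}^p_{\mathrm{u,loc}}(M;g_\psi)$ by hypothesis.

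The main obstacle is to deduce the volume growth bound and the two-sided Gaussian heat kernel estimates of Proposition \ref{katt} b) for $(M, g_\psi)$. For this, I would first work on $(M, g)$: the Ricci lower bound $\IRR \geq -C_1$ yields Bishop--Gromov volume comparison, and (via Buser's inequality in the form established by Saloff-Coste) a local Poincar\'e inequality on geodesic balls; together with the noncollapsing condition \eqref{m0} these properties give local volume doubling and a scale-invariant local Poincar\'e inequality up to some fixed radius. Saloff-Coste's characterization of the parabolic Harnack inequality then supplies two-sided Gaussian bounds on $p^g(t,x,y)$ for small $t$. Since volume doubling and the Poincar\'e inequality are both stable under quasi-isometric changes of the Riemannian structure, they transfer to $(M, g_\psi)$, and a second application of Saloff-Coste's theorem yields the analogous Gaussian bounds for the heat kernel $p^{g_\psi}(t,x,y)$; the companion polynomial-times-exponential upper bound on $\mathrm{vol}_{g_\psi}(\mathrm{K}^{g_\psi}_r(x))$ follows similarly from Bishop--Gromov on $(M, g)$ combined with the quasi-isometry. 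With these estimates established, Corollary \ref{cor1} b) applies to $(M, g_\psi)$ and delivers the desired heat semigroup characterization of the variation with respect to $g_\psi$.
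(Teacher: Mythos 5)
Your proposal is correct and follows essentially the same route as the paper: reduce to Corollary \ref{cor1} b) for $(M,g_\psi)$ by establishing the two-sided Gaussian heat kernel bounds via the Ricci lower bound (Li--Yau, or equivalently doubling plus local Poincar\'e in Saloff-Coste's framework), the Bishop--Gromov upper volume bound, the noncollapsing hypothesis \eqref{m0}, and the quasi-isometry stability of the parabolic Harnack inequality. Your explicit decomposition $\IRR_\psi=(\IRR+C_1+\ITT_1)-(C_1+\ITT_2)$ is exactly what the paper's closing remark about the boundedness of $\IRR_-$ is alluding to.
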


\begin{proof} Firstly, we note the classical fact that $\IRR\geq -C_1$ implies
Li-Yau\rq{}s estimate for 
all $t>0, x,y\in M$,
\begin{align}
\f{C_3}{\mathrm{vol}(\mathrm{K}_{\sqrt{t}}(x))}\mathrm{e}^{-C_4 \f{
\Id(x,y)^2}{t}}\leq p(t,x,y)
\leq \f{C_5}{\mathrm{vol}(\mathrm{K}_{\sqrt{t}}(x))}\mathrm{e}^{-C_6 \f{
\Id(x,y)^2}{t}},\label{m1}
\end{align}
thus using 
\begin{align}
\mathrm{vol}(\mathrm{K}_r(x))\leq C_7 r^m \mathrm{e}^{C_8 r}\text{ for all
$r>0$},\label{m2}
\end{align}
which is a simple consequence of Bishops\rq{}s volume comparison theorem (see for
example p. 7 in 
\cite{hebey}), we can deduce the inequality
\begin{align}
C_9 t^{ -\f{m}{2} } \mathrm{e}^{-C_{10} \f{ \Id(x,y)^2}{t}}\leq p(t,x,y)
\leq  C_{11} t^{ -\f{m}{2} } \mathrm{e}^{- C_{12} \f{ \Id(x,y)^2}{t}}\>\>
\text{ for all $0< t\leq 1$.}\label{m3}
\end{align}
By Theorem 5.5.3 in \cite{sal}, Li-Yau\rq{}s inequality is equivalent to the
conjunction of the local 
Poincar\'e inequality and volume doubling, which by Theorem 5.5.1 in \cite{sal} is
equivalent to the 
validity of the parabolic Harnack inequality. The latter inequality is stable under
a change to an 
quasi-isometric Riemannian structure by Theorem 5.5.9 in \cite{sal}, so that we also
have (\ref{m1}) with 
respect to $g_{\psi}$. Again using the quasi-isometry of the Riemannian structures,
it is clear that 
we also have (\ref{m0}) and (\ref{m2}), and thus (\ref{m3}) with respect to
$g_{\psi}$. But now we 
can use Corollary \ref{cor1} to deduce (\ref{hap}) with respect to $g_{\psi}$,
keeping in mind that 
the negative part $\IRR_-$ is bounded by assumption.
\end{proof}

The rest of this paper is devoted to the proof of Theorem \ref{BV}, which will
require two more 
auxiliary results. To this end, we have to introduce some probabilistic notation
first.\\
Let $(\Omega,\IFF,\IFF_*,\mathbb{P})$ be a filtered probability space which
satisfies the usual assumptions. 
We assume that $(\Omega,\IFF,\IFF_*,\mathbb{P})$ is chosen in a way such that it
carries an appropriate 
family of Brownian motions $B(x)\colon [0,\zeta(x))\times \Omega \to M$, $x\in M$,
where 
$\zeta(x):\Omega\to [0,\infty]$ is the lifetime of $B(x)$. The well-known relation
\cite{Hsu} 
\begin{align}
  \mathbb{P}\{B_t(x)\in N, t<\zeta(x)\}=\int_N p(t,x,y)\mathrm{vol}(\Id y)\>\>\text{
for any Borel set $N\subset M$}\nn
\end{align}
implies directly that for a Borel function 
$w:M\to \IC$ one has $w\in\mathcal{K}(M)$, if and only if
\[
\lim_{t\to 0+}\sup_{x\in M} \mathbb{E}\left[\int^t_0
\left|w(B_s(x))\right|1_{\{s<\zeta(x)\}}\Id s\right] =0,
\]
which is the direct link between Theorem \ref{BV} and probability theory. We will
need the following 
subtle generalization of Proposition 2.5 from \cite{G2}, which does not require any
control on the 
Riemannian structure of $M$:

\begin{Lemma}\label{xdd} 
For any $v\in\mathcal{K}(M)$ and any $\delta>1$ there is a $C(v,\delta)>0$ such that
for all $t\geq 0$,
\begin{align}
\sup_{x\in M} \mathbb{E}\left[\mathrm{e}^{\int^t_0 \left|v(B_s(x))\right|\Id
s}1_{\{t<\zeta(x)\}}\right]
\leq \delta \mathrm{e}^{tC(v,\delta)}. \label{way0}
\end{align}
\end{Lemma}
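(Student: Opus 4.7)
The plan is to prove a sharpened Khasminskii-type estimate in which the multiplicative prefactor can be brought arbitrarily close to $1$, at the cost of an exponential factor. The three steps are: (i) a small-time reduction using the defining property of the Kato class; (ii) the Khasminskii series argument on $[0,t_0]$; and (iii) propagation to arbitrary $t\ge 0$ by iterating the Markov property along the grid $\{kt_0\}_{k\in\IN_0}$.

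Concretely, let $\alpha(t):=\sup_{x\in M}\mathbb{E}[\int_0^t|v(B_s(x))|1_{\{s<\zeta(x)\}}\Id s]$, which is nondecreasing in $t$ and satisfies $\alpha(t)\to 0$ as $t\to 0+$ by the probabilistic characterization of $\mathcal{K}(M)$ recalled just before the lemma. Given $\delta>1$, I would fix $\epsilon\in(0,1)$ with $(1-\epsilon)^{-1}\le\delta$ and then pick $t_0=t_0(v,\delta)>0$ so small that $\alpha(t_0)\le\epsilon$. Writing $A_t(x):=\int_0^t|v(B_s(x))|\Id s$ and $\phi_t(x):=\mathbb{E}[\mathrm{e}^{A_t(x)}1_{\{t<\zeta(x)\}}]$, the simplex representation
$$\tfrac{1}{n!}A_t(x)^n=\int_{0<s_1<\cdots<s_n<t}|v(B_{s_1}(x))|\cdots|v(B_{s_n}(x))|\Id s_1\cdots\Id s_n,$$
combined with an induction on $n$ based on the Markov property at the outermost time $s_1$ (using that on $\{s_1<\zeta(x)\}$ the shifted motion is a Brownian motion starting from $B_{s_1}(x)$ with lifetime $\zeta(x)-s_1$), yields the key uniform bound
$$\sup_{x\in M}\mathbb{E}\!\left[\tfrac{1}{n!}A_t(x)^n\,1_{\{t<\zeta(x)\}}\right]\le\alpha(t)^n,$$
so that summing the Taylor series gives $\sup_x\phi_t(x)\le(1-\alpha(t))^{-1}\le\delta$ for all $t\in[0,t_0]$.

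To propagate to arbitrary $t$, write $t=kt_0+r$ with $k\in\IN_0$, $r\in[0,t_0)$, and invoke the Markov property at time $t_0$ to get
$$\phi_t(x)=\mathbb{E}\!\left[\mathrm{e}^{A_{t_0}(x)}1_{\{t_0<\zeta(x)\}}\,\phi_{t-t_0}(B_{t_0}(x))\right]\le\delta\cdot\sup_{y\in M}\phi_{t-t_0}(y).$$
Iterating this $k$ times and then applying the short-time bound of the preceding paragraph to the remaining time $r\le t_0$ produces
$$\sup_{x\in M}\phi_t(x)\le\delta^{k+1}\le\delta\cdot\mathrm{e}^{(t/t_0)\log\delta},$$
so setting $C(v,\delta):=(\log\delta)/t_0$ gives the asserted \eqref{way0}.

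The subtle novelty compared with the usual Khasminskii bound is that the prefactor $\delta$ may be chosen to be \emph{any} number $>1$, not merely some fixed constant. The mechanism is precisely that $\alpha(t)\to 0$ as $t\to 0+$, which allows one to shrink $t_0$ until $(1-\alpha(t_0))^{-1}$ is as close to $1$ as desired; the penalty is then transferred to the exponential factor $\mathrm{e}^{tC(v,\delta)}$. This extra flexibility is exactly what Remark \ref{bem} flags as being \emph{just enough} to push the Carbonaro--Mauceri strategy to our unbounded-below Ricci curvature setting in Theorem \ref{BV}. The only technical care needed is to track the lifetime indicator $1_{\{s<\zeta\}}$ through each invocation of the Markov property, which is routine for Brownian motion on a (possibly stochastically incomplete) Riemannian manifold.
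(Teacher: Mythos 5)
Your proposal is correct and follows essentially the same route as the paper: a Khasminskii-type bound on a short interval $[0,t_0]$ chosen via the Kato property so that the prefactor is at most $\delta$, followed by iteration of the Markov property along the grid $\{kt_0\}$ to produce the exponential factor. The only cosmetic differences are that you condition at the first rather than the last simplex time and carry the lifetime indicator directly instead of passing to the Alexandroff compactification as the paper does.
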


\begin{proof} The proof is an adaption of that of Proposition 2.5 from \cite{G2}
(see particularly 
also \cite{dem}, \cite{sznit}, \cite{ai}). We give a detailed proof here for the
convenience of the reader. 
Let us first state two abstract facts:\\
1. With $\hat{M}=M\cup\{\infty_M\}$ the Alexandroff compactification of $M$, we can
canonically extend 
any Borel function $w:M\to \IC$ to a Borel function $\hat{w}:M\to \IC$ by setting 
$\hat{w}(\infty_M)=0$, and $B(x)$ to a process $\hat{B}(x):[0,\infty)\times
\Omega\to \hat{M}$ 
by setting $\hat{B}_s(x)(\omega):=\infty_M$, if $s\geq \zeta(x)(\omega)$. Then one
trivially has 
\begin{align}
\mathbb{E}\left[\mathrm{e}^{\int^t_0 \left|w(B_s(x))\right|\Id
s}1_{\{t<\zeta(x)\}}\right]
\leq  \mathbb{E}\left[\mathrm{e}^{\int^t_0 \left|\hat {w}(\hat{B}_s(x))\right|\Id
s}\right].\label{way}
\end{align}
2. For any Borel function $w:M\to \IC$ and any $s\geq 0$ let
\begin{align}
D(w,s):=&\sup_{x\in M}\mathbb{E}\left[\int^s_0\left|\hat{w}(\hat{B}_r(x))\right|\Id
r\right]\nn\\
&=\sup_{x\in M}\mathbb{E}\left[\int^t_0 \left|w(B_s(x))\right|1_{\{s<\zeta(x)\}}\Id
s\right]\in [0,\infty],\nn
\end{align}
and
\begin{align}
\tilde{D}(w,s):=
\sup_{x\in
M}\mathbb{E}\left[\mathrm{e}^{\int^s_0\left|\hat{w}(\hat{B}_r(x))\right|\Id
r}\right]
\in [0,\infty].\nn
\end{align}
Then \emph{Kas\rq{}minskii\rq{}s Lemma} states that the following assertion holds: 
\begin{align}
\text{For any $s>0$ with $D(w,s)<1$ one has $\tilde{D}(w,s)\leq \f{1}{1-D(w,s).}$ 
}\label{way2}
\end{align}
This estimate can be proved as follows: For any $n\in\IN$ let
\[
s\sigma_n:=\Big\{q=(q_1,\dots,q_n)\left|\> 0\leq q_1\leq\dots\leq q_n\leq
s\Big\}\right.\subset \IR^n
\]
denote the $s$-scaled standard simplex. Then it is sufficient to prove that for all
$n$ one has 
\begin{align}
\tilde{D}_n(w,s)&:=
\sup_{x\in
M}\int_{s\sigma_n}\mathbb{E}\left[\left|\hat{w}(\hat{B}_{q_1}(x))\right|\dots
\left|\hat{w}
(\hat{B}_{q_n}(x))\right|\right]\Id^n q\nn\\
&\leq \f{1}{1-D(w,s)} \tilde{D}_{n-1}(w,s).
\end{align}
But the Markoff property of $B(x)$ implies
\begin{align}
\tilde{D}_n(w,s)&=\sup_{x\in M}\int_{s\sigma_{n-1}}\mathbb{E}
\left[\left|\hat{w}(\hat{B}_{q_1}(x))\right|\dots
\left|\hat{w}(\hat{B}_{q_{n-1}}(x))\right|\times\right.\nn\\
&\hspace{28mm}\left.\times
\mathbb{E}\left[\int^{s-q_{n-1}}_0\left|\hat{w}(\hat{B}_u(y))\right|
\Id u\right]\mid_{y=\hat{B}_{q_{n-1}}(x)}\right]\Id^{n-1}q\nn\\
&\leq \f{1}{1-D(w,s)} \tilde{D}_{n-1}(w,s),
\end{align}
\vspace{1.2mm}
which proves Kas\rq{}minskii\rq{}s lemma.\\
Using the two observations above, the actual proof of (\ref{way0}) can be carried
out as follows: 
By the Kato property of $v$, we can pick an $s(v,\delta)>0$ with
$D(v,s(v,\delta))<1-1/\delta$. 
Let $n\in\IN$ be large enough with $t\leq (n+1) s(v,\delta)$. Then the Markoff
property of $B(x)$ 
and Kas\rq{}minskii\rq{}s Lemma imply 
\begin{align}
&\tilde{D}(v,t) \nn\\
&\leq \tilde{D}(v,(n+1) s(v,\delta))\nn\\
&=\sup_{x\in M}
\mathbb{E}\left[\mathrm{e}^{\int^{ns(v,\delta)}_0\left|\hat{v}(\hat{B}_r(x))\right|
\Id
r}\mathbb{E}\left[\mathrm{e}^{\int^{s(v,\delta)}_0\left|\hat{v}(\hat{B}_r(y))\right|\Id
r}\right]
\mid_{y=\hat{B}_{ns(v,\delta)}(x)}\right]\nn\\
&\leq \f{1}{1-D(v,s(v,\delta))} \tilde{D}(v,n s(v,\delta))  \nn\\
&=\f{1}{1-D(v,s(v,\delta))}\times\nn\\
&\>\>\>\>\>\times \sup_{x\in M}
\mathbb{E}\left[\mathrm{e}^{\int^{(n-1)s(v,\delta)}_0\left|\hat{v}
(\hat{B}_r(x))\right|\Id
r}\mathbb{E}\left[\mathrm{e}^{\int^{s(v,\delta)}_0\left|\hat{v}(\hat{B}_r(y))
\right|\Id r}\right]\mid_{y=\hat{B}_{(n-1)s(v,\delta)}(x)}\right]\nn\\
&\leq \dots\>\text{($n$-times)}\nn\\
&\leq \f{1}{1-D(v,s(v,\delta))}\left(\f{1}{1-D(v,s(v,\delta))}\right)^{n}  \nn\\
&\leq \f{1}{1-D(v,s(v,\delta))} \mathrm{e}^{ \f{t}{s(v,\delta)} 
\mathrm{log}\left(\f{1}{1-D(v,s(v,\delta))}\right) }\nn\\
& < \delta \ \mathrm{e}^{ \f{t}{s(v,\delta)} \mathrm{log}\left( 
\f{1}{1-D(v,s(v,\delta))}  \right) },\nn
\end{align}
which proves (\ref{way0}) in view of (\ref{way}).
\end{proof}

The latter result will be used to deduce:

\begin{Lemma}\label{smoo} 
Under the assumptions of Theorem \ref{BV}, for any $\delta>1$ there is a
$C(\delta)>0$ such that 
for any $t\geq 0$ and any $\alpha\in \Omega^1_{\mathsf{L}^{2}\cap
\mathsf{L}^{\infty}}(M)$ one has 
\begin{align}
\left\|\mathrm{e}^{-t H_1}\alpha\right\|_{\infty}
\leq \delta \mathrm{e}^{tC(\delta)} \left\|\alpha\right\|_{\infty}.
\end{align}
\end{Lemma}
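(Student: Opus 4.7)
The strategy is to express $\mathrm{e}^{-tH_1}$ as a generalized Schrödinger semigroup via Weitzenböck's formula and then dominate it pointwise through a scalar Feynman--Kac functional to which Lemma \ref{xdd} applies.

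First, I would invoke Weitzenböck's identity $-\Delta_1=\nabla^{\dagger}\nabla+\IRR$ on $\Omega^1_{\mathsf{C}^{\infty}}(M)$, so that $2H_1=\nabla^{\dagger}\nabla+\IRR$ with pointwise self-adjoint potential $\IRR/2$. Since $\IRR_1\geq 0$, we have $\IRR\geq -\IRR_2$ in the sense of pointwise self-adjoint endomorphisms, hence $\IRR/2\geq -|\IRR_2|/2$ (using that $\IRR_2\geq 0$ implies $\IRR_2\leq |\IRR_2|\cdot\mathrm{id}$ on each fiber, so $-\IRR_2\geq -|\IRR_2|\cdot\mathrm{id}$). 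The assumption $|\IRR_2|\in\mathcal{K}(M)$ and geodesic completeness place us precisely in the setting of the abstract results on generalized Schrödinger semigroups from \cite{G2}.

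Next, I would apply the Feynman--Kac--Itô formula for covariant Schrödinger semigroups from \cite{G2}: for $\alpha\in\Omega^1_{\mathsf{L}^2\cap\mathsf{L}^{\infty}}(M)$ and a.e. $x\in M$,
\[
\mathrm{e}^{-tH_1}\alpha(x)=\mathbb{E}\bigl[\mathscr{A}_t(x)\,\pa_t(x)^{-1}\alpha(B_t(x))\,1_{\{t<\zeta(x)\}}\bigr],
\]
where $\pa_t(x)$ denotes stochastic parallel transport along $B(x)$ and $\mathscr{A}_t(x)\in\mathrm{End}(\mathrm{T}^*_xM)$ is the pathwise-defined multiplicative operator functional solving
\[
\f{d}{dt}\mathscr{A}_t(x)=-\tfrac{1}{2}\mathscr{A}_t(x)\,\pa_t(x)^{-1}\IRR(B_t(x))\pa_t(x),\quad \mathscr{A}_0(x)=\mathrm{id}.
\]
Since $\pa_t(x)$ is an isometry and $\IRR\geq -|\IRR_2|\cdot\mathrm{id}$, a standard Gronwall argument on fibers gives the operator bound $|\mathscr{A}_t(x)|\leq \exp\bigl(\tfrac{1}{2}\int_0^t|\IRR_2|(B_s(x))\,ds\bigr)$ almost surely on $\{t<\zeta(x)\}$. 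Combining this semigroup domination with the trivial estimate $|\alpha(B_t(x))|_{B_t(x)}\leq\|\alpha\|_{\infty}$ yields
\[
\bigl|\mathrm{e}^{-tH_1}\alpha(x)\bigr|_x\leq \|\alpha\|_{\infty}\;\mathbb{E}\Bigl[\mathrm{e}^{\int_0^t \tfrac{1}{2}|\IRR_2|(B_s(x))\,ds}\,1_{\{t<\zeta(x)\}}\Bigr].
\]

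Finally, since $|\IRR_2|/2\in\mathcal{K}(M)$, I would apply Lemma \ref{xdd} with $v=|\IRR_2|/2$: for any $\delta>1$ there is $C(\delta)>0$ (depending on $|\IRR_2|$ and $\delta$) such that the supremum over $x\in M$ of the above expectation is bounded by $\delta\,\mathrm{e}^{tC(\delta)}$. Taking the supremum over $x$ on the left side of the displayed inequality yields the claim. The main technical obstacle is justifying the semigroup domination bound for $|\mathscr{A}_t(x)|$ in its stated sharp form: one must verify that the pointwise operator inequality $\IRR\geq -|\IRR_2|\cdot\mathrm{id}$ really transfers through the pathwise Gronwall estimate to give an exponential bound in terms of the scalar Feynman--Kac functional of $|\IRR_2|/2$ rather than of $\IRR_2$ itself; this is where the choice to pass to the operator norm $|\IRR_2|$ (rather than keeping $\IRR_2$ as a matrix-valued potential) is essential, and it is what makes Lemma \ref{xdd}, a purely scalar statement, applicable.
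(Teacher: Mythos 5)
Your proposal is correct and follows essentially the same route as the paper: Weitzenb\"ock's formula, semigroup domination of $\mathrm{e}^{-tH_1}$ by the scalar Feynman--Kac functional of $|\IRR_2|/2$ (the paper's $w_2=\max\sigma(\IRR_2/2(\bullet))$ is exactly your $|\IRR_2|/2$), and then Lemma \ref{xdd}. The only step the paper makes explicit that you elide is the identification of $H_1$, the Friedrichs realization of $-\Delta_1/2$, with the form sum of the Bochner Laplacian and $\IRR/2$ to which the Feynman--Kac and domination machinery of \cite{G2} actually applies --- this rests on the essential self-adjointness of $-\Delta_1$ on $\Omega^1_{\mathsf{C}^{\infty}_0}(M)$ under geodesic completeness --- whereas you instead reprove the domination itself via the pathwise Gronwall bound on the multiplicative functional, a step the paper simply cites.
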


\begin{proof} The Weitzenb\"ock formula states that
$-\Delta_1/2=\nabla^{\dagger}_1\nabla_1/2+ \IRR/2$, 
where $\nabla_1$ stands for the Levi-Civita connection acting on $1$-forms. Under
the given assumptions 
on $\IRR$, we can use Theorem 2.13 in \cite{G1} to define the form sum $\tilde{H_1}$
of the Friedrichs 
realization of $\nabla^{\dagger}_1\nabla_1/2$ and the multiplication operator
$\IRR/2$ in 
$\Omega^k_{\mathsf{L}^{2}}(M)$. But the geodesic completeness assumption implies the
essential 
self-adjointness of $-\Delta_1$ on the domain of definition
$\Omega^1_{\mathsf{C}^{\infty}_0}(M)$ 
(this essential self-adjointness is a classical result \cite{str}; under our
assumption on $\IRR$, 
this also follows from the main result of \cite{post}), and as a consequence we get
$H_1=\tilde{H_1}$. \\
We define scalar potentials $w_j:M\to [0,\infty)$, $w:M\to \IR$ by
  \begin{align}
    w_1:= \min\sigma(\IRR_1/2(\bullet)),\>w_2:=
    \max\sigma(\IRR_2/2(\bullet)),\>w:=w_1-w_2,\nn
  \end{align}
where $\sigma(\IRR_j/2(x))$ stands for the spectrum of the nonnegative self-adjoint
operator 
$\IRR_j/2(x):\mathrm{T}^*_xM \to \mathrm{T}^*_xM$. Then clearly
$w_1\in\mathsf{L}^1_{\mathrm{loc}}(M)$, 
$w_2\in \mathcal{K}(M)$, and the above considerations combined with Theorem 2.13
from \cite{G2} 
(semigroup domination), Theorem 2.9 from \cite{G2} (a scalar Feynman-Kac formula)
and $-w\leq w_2$ 
imply the first inequality in  
\begin{align}
\left|\mathrm{e}^{-t H_1}\alpha(x)\right|_x&\leq 
\mathbb{E}\left[\mathrm{e}^{\int^t_0 w_2(B_s(x))\Id
s}|\alpha|(B_t(x))1_{\{t<\zeta(x)\}}\right]\nn\\
&\leq \left\|\alpha\right\|_{\infty}\mathbb{E}
\left[\mathrm{e}^{\int^t_0 w_2(B_s(x))\Id s}1_{\{t<\zeta(x)\}}\right]\>\text{ for
a.e. $x\in M$}.
\end{align}
But now the assertion follows readily from Lemma \ref{xdd}.
\end{proof}

Now we can prove Theorem \ref{BV}:

\begin{proof}[Proof of Theorem \ref{BV}] Firstly, the inequality
\[
\mathrm{Var}(f)\leq \liminf_{t\to 0+} \int_M\left|\Id \mathrm{e}^{-tH}f(x)\right|_x 
\mathrm{vol}(\Id x)
\]
can be deduced exactly as in the proof of Theorem 3.2 in \cite{carbo}. In fact, this
inequality is always 
satisfied without any assumptions on the Riemannian structure of $M$. To see this,
just note that if 
$\alpha\in\Omega^1_{\mathsf{C}^{\infty}_0}(M)$ is such that
$\left\|\alpha\right\|_{\infty}\leq 1$, then 
\begin{align}
\left|\int_M  \overline{ f(x)}\Id^{\dagger} \alpha (x) \mathrm{vol}(\Id x)\right|&=
\left|\lim_{t\to 0+}\int_M \overline{ \mathrm{e}^{-tH}f(x)} \Id^{\dagger} \alpha(x) 
\mathrm{vol}(\Id x)\right|\nn\\
&=\left|\lim_{t\to 0+}\int_M (\Id\mathrm{e}^{-tH}f(x) , \alpha(x))_x 
\mathrm{vol}(\Id x)\right|\nn\\
&\leq \liminf_{t\to 0+} \int_M\left|\Id \mathrm{e}^{-tH}f(x)\right|_x
\mathrm{vol}(\Id x),
\end{align}
where we have used $\left\| \mathrm{e}^{-tH}f-f\right\|_1\to 0$ as $t\to 0+$.\\
In order to prove 
\begin{align}
\mathrm{Var}(f)\geq \limsup_{t\to 0+} \int_M\left|\Id \mathrm{e}^{-tH}f(x)\right|_x
\mathrm{vol}(\Id x),
\label{d13}
\end{align}
we first remark the well-known fact (see for example the appendix of \cite{thal})
that geodesic 
completeness implies
\begin{align}
\mathrm{e}^{-t H_1}\Id h=\Id \mathrm{e}^{-t H}h\>\>\text{ for any
$h\in\mathsf{C}^{\infty}_0(M)$}.\label{h1}
\end{align}
Now let $\alpha\in\Omega^1_{\mathsf{C}^{\infty}_0}(M)$ be such that
$\left\|\alpha\right\|_{\infty}\leq 1$, 
and let $t,\epsilon>0$ be arbitrary. Then Lemma \ref{smoo} shows
\begin{align}
\left\|\mathrm{e}^{-t H_1}\alpha\right\|_{\infty}\leq (1+\epsilon)
\mathrm{e}^{tC(\epsilon)}, \label{h2}
\end{align}
and applying (\ref{h1}) with $h=\Id^{\dagger}\alpha$ gives (by testing against 
$\tilde{h}\in \mathsf{C}^{\infty}_0(M)$) the identity 
$\Id^{\dagger} \mathrm{e}^{-t H_1}\alpha=\mathrm{e}^{-t H}\Id^{\dagger}\alpha$. So
using 
$\mathrm{e}^{-tH}\in \ILL(\mathsf{L}^{\infty}(M))$, we can conclude 
$\mathrm{e}^{-t H_1}\alpha\in\Omega^1_{\mathrm{bd}}(M)$. Finally, combining 
$\mathrm{e}^{-t H_1}\alpha\in\Omega^1_{\mathrm{bd}}(M)$, (\ref{h2}), Lemma \ref{d12}
and 
$\Id^{\dagger} \mathrm{e}^{-t H_1}\alpha=\mathrm{e}^{-t H}\Id^{\dagger}\alpha$, we have
\begin{align}
\left|\int_M \overline{\mathrm{e}^{-t H}f(x)}\Id^{\dagger}\alpha(x) 
\mathrm{vol}(\Id x)\right|&=
\left|\int_M \overline{f(x)}\Id^{\dagger} \mathrm{e}^{-t H_1}\alpha(x)
\mathrm{vol}(\Id x)\right|\nn\\
&\leq (1+ \epsilon) \mathrm{e}^{tC(\epsilon)}\mathrm{Var}(f), 
\end{align}
so that taking the supremum over all such $\alpha$ and using Proposition \ref{d14}
we arrive at
\begin{align}
 \int_M\left|\Id \mathrm{e}^{-tH}f(x)\right|_x \mathrm{vol}(\Id x)&=
\sup_{\alpha}\left|\int_M \overline{\mathrm{e}^{-t
H}f(x)}\Id^{\dagger}\alpha(x)\mathrm{vol}(\Id x)\right|\nn\\
 & \leq (1+ \epsilon) \mathrm{e}^{tC(\epsilon)} \mathrm{Var}(f),
\end{align}
which proves (\ref{d13}) by first taking $\limsup_{t\to 0+}$, and then taking
$\lim_{\epsilon\to 0+}$.
\end{proof} 
 
\appendix
\section{Vector measures on locally compact spaces }\label{complexmeas}

Let $\IK$ be either $\IC$ or $\IR$, and denote the corresponding standard
inner-product and norm on $\IK^m$ with $(\bullet,\bullet)_{\IK^m}$ and
$\left|\bullet\right|_{\IK^m}$. Let $X$ be a locally compact Hausdorff space with
its Borel-$\sigma$-algebra ${\mathcal B}(X)$. We denote with
$\mathsf{C}_{\infty}(X,\IK^m)$ the space of $\IK^m$-valued functions on $X$ that
vanish at infinity, which is a $\IK$-Banach space with respect to the uniform norm
$\left\|\bullet\right\|_{\infty}$.\\
A \emph{$\IK^m$-valued Borel measure} on $X$ is defined to be a countably additive
set function $\nu:{\mathcal B}(X)\to\IK^m$, and its \emph{total variation} measure
is the positive finite Borel measure defined for $B\in \IB(X)$ by 
\begin{equation}\label{e1}
|\nu|(B)=\sup\left\{\left. \sum_{j=1}^\infty \left|\nu(B_j)\right|_{\IK^m}\right|\>
B_j\in \IB(X) \> \text{ for all $j\in\IN$}, B=\bigsqcup_{j=1}^\infty B_j\right\}.
\nn
\end{equation}
Of course one has $\nu=|\nu|$, in case $\nu$ itself is a positive finite Borel
measure. \\
Let us denote the $\IK$-linear space of $\IK^m$-valued Borel measure on $X$ with
$\tilde{\IMM}(X,\IK^m)$. Then one has: 

\begin{Proposition}\label{rm} 
\emph{a)} $\tilde{\IMM}(X,\IK^m)$ is a $\IK$-Banach space with respect to the total
variation norm, and the map
\[
\tilde{\Psi}: \tilde{\IMM}(X,\IK^m)\longrightarrow 
(\mathsf{C}_{\infty}(X,\IK^m))^*,\>\> \tilde{\Psi}(\nu)(f)=
\int_X (f,\Id \nu)_{\IK^m} 
\]
is an isometric isomorphism of $\IK$-linear spaces. If $m=1$, then $\tilde{\Psi}$ is
order preserving.\\
\emph{b)} The map
\[
\tilde{\IMM}(X,\IC^m)\longrightarrow \tilde{\IMM}(X,\IR^{2m}),\>\>\nu\longmapsto 
\nu_{\IR^{2m}}:=(\mathrm{Re}(\nu),\mathrm{Im}(\nu))
\]
is an isometric isomorphism of $\IR$-linear spaces.
\end{Proposition}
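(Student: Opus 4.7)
The plan is to reduce part a) to the classical scalar Riesz-Markov-Kakutani representation theorem, exploiting the componentwise structure of $\IK^m$-valued measures, and then to derive part b) from the canonical identification of $\IC^m$ with $\IR^{2m}$ as Euclidean spaces. Concretely, any $\nu\in \tilde{\IMM}(X,\IK^m)$ is of the form $\nu=(\nu^1,\dots,\nu^m)$ with each $\nu^j\in\tilde{\IMM}(X,\IK)$ a scalar Borel measure, and the pairing unfolds as $\tilde{\Psi}(\nu)(f)=\sum_{j=1}^m\int_X f^j\Id \nu^j$. The heart of the argument is therefore the classical statement that in the scalar case the map $\nu\mapsto\bigl(f\mapsto \int_X f\Id\nu\bigr)$ is an isometric isomorphism from $\tilde{\IMM}(X,\IK)$ onto $(\mathsf{C}_{\infty}(X,\IK))^*$: for $\IK=\IR$ this follows from the Jordan decomposition combined with Riesz-Markov for positive Radon measures, and for $\IK=\IC$ one reduces to the real case via $\nu=\mathrm{Re}(\nu)+i\,\mathrm{Im}(\nu)$.

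For the isometry $\|\tilde{\Psi}(\nu)\|_*=|\nu|(X)$, the inequality $\leq$ is immediate from $\bigl|\int_X(f,\Id\nu)_{\IK^m}\bigr|\leq \int_X |f|_{\IK^m}\Id|\nu|\leq \|f\|_{\infty}|\nu|(X)$. For the reverse inequality, the key step I expect to be the main technical point is the \emph{vector-valued polar decomposition}: there exists a Borel section $h:X\to\IK^m$ with $|h|_{\IK^m}=1$ $|\nu|$-a.e. such that $\Id\nu=h\Id|\nu|$. This can be obtained either by applying Radon-Nikodym componentwise (each $\nu^j$ is absolutely continuous with respect to $|\nu|$) and normalizing, or from the analogous scalar polar decomposition applied to a scalar rearrangement. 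Given $h$, one has $|\nu|(X)=\int_X (h,\Id\nu)_{\IK^m}$, and Lusin's theorem combined with the inner regularity of $|\nu|$ produces a sequence $f_n\in\mathsf{C}_\infty(X,\IK^m)$ with $\|f_n\|_\infty\leq 1$ and $f_n\to h$ in $\mathsf{L}^1(|\nu|)$, forcing $\|\tilde{\Psi}(\nu)\|_*\geq|\nu|(X)$.

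For surjectivity, given $T\in(\mathsf{C}_\infty(X,\IK^m))^*$, I set $T_j(g):=T(g\,e_j)$ for $g\in \mathsf{C}_\infty(X,\IK)$ and $e_j$ the $j$-th standard basis vector of $\IK^m$; each $T_j$ is a scalar bounded linear functional, hence represented by some $\nu^j\in\tilde{\IMM}(X,\IK)$, and $\nu=(\nu^1,\dots,\nu^m)$ satisfies $T=\tilde{\Psi}(\nu)$ by linearity and density of finite sums $\sum f^j e_j$. Injectivity is immediate from the isometry. The Banach space structure on $\tilde{\IMM}(X,\IK^m)$ is then transported from the dual space $(\mathsf{C}_\infty(X,\IK^m))^*$, which is complete. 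The order-preservation assertion for $m=1$ is straightforward: positive measures clearly induce positive functionals, and conversely if $\tilde{\Psi}(\nu)(f)\geq 0$ for every nonnegative $f\in\mathsf{C}_\infty(X,\IR)$, then approximating indicators of Borel sets via inner regularity of $|\nu|$ yields $\nu(B)\geq 0$. Finally, part b) is essentially tautological: under the canonical identification $\IC^m\cong\IR^{2m}$ the Euclidean norms coincide, hence the definition of total variation in \eqref{e1} yields $|\nu|(X)=|\nu_{\IR^{2m}}|(X)$, and the $\IR$-linearity of the correspondence is obvious.
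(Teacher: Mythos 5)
Your argument is correct and takes essentially the same route as the paper, which simply cites the vector-valued variant of the Riesz--Markoff theorem (Rudin, Theorem 6.19) for part a) and, for part b), records the polar decomposition $\Id \nu=\sigma\,\Id|\nu|$ before concluding, exactly as you do, that $|\nu|=|\nu_{\IR^{2m}}|$ because $|\bullet|_{\IC^{m}}$ and $|\bullet|_{\IR^{2m}}$ coincide under the canonical identification. Your write-up merely supplies the standard details (componentwise reduction to the scalar case, polar decomposition plus Lusin's theorem for the isometry, component functionals for surjectivity) that the paper delegates to the reference.
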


\begin{proof} Part a) is just a variant of Riesz-Markoff\rq{}s theorem, see e.g.
\cite[Theorem 6.19]{Rudin}. 
In part b) it is clear that the indicated map is an $\IR$-linear isomorphism. The
essential observation 
for the proof of $|\nu|=|\nu_{\IR^{2m}}|$ is the polar decomposition: Namely, by a
Radon-Nikodym-type argument one gets the existence of a Borel function
$\sigma:X\to\IC^m$, with $|\sigma|=1$ $|\nu|$-a.e. in $X$, 
such that $\Id \nu=\sigma \ \Id |\nu|$. As the norms $|\bullet|_{\IR^{2m}}$ and
$|\bullet|_{\IC^{m}}$ are equal under 
the canonical identification of $\IR^{2m}$ with $\IC^{m}$, it follows directly from
the definition of the 
total variation measure that $|\nu|=|\nu_{\IR^{2m}}|$. 
\end{proof}

\end{document}